\newtheorem{thm}{Theorem}[section]
\newtheorem{lem}[thm]{Lemma}
\newtheorem{cly}[thm]{Corollary}
\newtheorem{prop}[thm]{Proposition}
\theoremstyle{definition}
\newtheorem{defn}[thm]{Definition}
\theoremstyle{remark}
\newtheorem{rem}[thm]{Remark}
\numberwithin{equation}{section}        %
\newcommand{\N}{\mathbb{N}}             %
\newcommand{\chrom}[1]{\chi_{#1}}
\renewcommand{\sideset}[3]{%
  \@mathmeasure\z@\displaystyle{#3}%
  \global\setbox\@ne\vbox to\ht\z@{}\dp\@ne\dp\z@
  \setbox\tw@\box\@ne
  \@mathmeasure4\displaystyle{\copy\tw@#1}%
  \@mathmeasure6\displaystyle{#3\nolimits#2}%
  \dimen@-\wd6 \advance\dimen@\wd4 \advance\dimen@\wd\z@
  \mathop{\hbox to\dimen@{}}\!%
  \mathop{\kern-\dimen@\box4\box6}%
}
\newcommand{\sidesum}[1]{\sideset{}{^{(#1)}}\sum}
\title{Recursion relations for chromatic coefficients\\ for graphs and hypergraphs}
\author[1]{Bergfinnur Durhuus\thanks{durhuus@math.ku.dk}}
\author[1,2]{Angelo Lucia\thanks{alucia@caltech.edu}}
\affil[1]{Department of Mathematical Sciences, Copenhagen University \protect\\
Universitetsparken 5, DK-2100 Copenhagen {\O}, Denmark}
\affil[2]{Walter Burke Institute for Theoretical Physics and Institute for Quantum Information \& Matter,
California Institute of Technology, Pasadena, CA 91125, USA}
\begin{document}
\maketitle

\begin{abstract}
    We establish a set of recursion relations for the coefficients in the chromatic polynomial of a graph or a hypergraph.
    As an application we provide a generalization of Whitney's broken cycle theorem for hypergraphs,
    as well as deriving an explicit formula for the linear coefficient of the chromatic polynomial
    of the $r$-complete hypergraph in terms of roots of the Taylor polynomials for the exponential function.
\end{abstract}

\section{Introduction}
The chromatic polynomial $\chi_G$ associated to a graph $G$, introduced by Birkhoff \cite{birkhoff},
is determined by defining $\chi_G(\lambda)$, for $\lambda\in \mathbb N$, to be the number of
colourings of the vertices of $G$ with at most $\lambda$ colours, such that no adjacent vertices are
attributed the same colour\cite{read,Dong2005}. The definition extends to hypergraphs\cite{bujitas2015}, by considering colourings such that each hyperedge contains at least two vertices with different colours.
In the case of graphs, Whitney's broken cycle theorem \cite{whitney,Dohmen1999,blass-sagan,dohmen2} provides a combinatorial interpretation to the coefficients of the chromatic polynomial $\chi_G(\lambda)$ : if a graph $G$ has $n$ vertices, then the coefficient of $\lambda^i$ is given, up to the sign $(-1)^{n-i}$, by the number of spanning subgraphs of $G$ with $n-i$ edges with the property of not containing as a subset any of a particular list of special subgraphs of $G$, known as \emph{broken cycles}\footnote{Whitney's original theorem mentions \emph{broken circuits} instead, but the distinction between circuits and cycles is not relevant in this context.}.

In the present article, we establish a set of recursion relations for the coefficients of the chromatic polynomial of a graph or hypergraph, which allow us to express the $i$-th order coefficient in terms of products of linear coefficients of certain subgraphs. We similarly show that the combinatorial quantities appearing in Whitney's theorem (as well as a natural generalization of them which covers the case of hypergraphs) also satisfy the same recursion relations (up to a sign factor).
Since the two sequences are recursively defined by the same relations and it can be easily verified
that they coincide on empty graphs, we obtain as a consequence a generalization of the broken cycle
theorem for hypergraphs. There are a number of different extensions of Whitney's theorem to
hypergraphs already present in the literature \cite{dohmen1, dohmen2, trinks, dohmen-trinks}. The one we present here encompasses those known to us.

As a second application of the recursion relations, we derive an explicit formula for the linear chromatic coefficient of the $r$-complete hypergraphs in terms of the roots of the $(r-1)$'th Taylor polynomial of the exponential function (where the $r$-complete hypergraph is the hypergraph containing all possible hyperedges of cardinality $r$).

Whitney's theorem implies that the coefficients of the chromatic polynomial of a graph are always integers with alternating signs. Moreover, applying the deletion-contraction principle for the chromatic polynomial \cite{birkhoff,whitney,Dong2005}, one can also show that they are numerically upper bounded by the corresponding coefficient for the complete graph of the same order. We show that both these facts can be obtained in a simple way as a consequence of the recursion relations we present, without using neither Whitney's theorem nor the deletion-contraction principle.

The paper is organized as follows. In Section~\ref{sec:graphs}, we start by presenting the simpler case of the recursion relations for graphs, together with a new proof of Whitney's theorem in its original form. The section will follow the same approach we will use for the general case, but since it is arguably easier we present it here for illustration of the method, but it can safely be skipped.
In Section~\ref{sec:hypergraphs} we present the general case of hypergraphs, and the generalization of Whitney's theorem.
Finally in Section~\ref{sec:complete-hypergraphs} we apply the recursion relations to obtain the formula for the linear coefficient of the $r$-complete hypergraph.

\section{The recursion relations for graphs}
\label{sec:graphs}
In this section $G=(V,E)$ denotes a simple graph, where $V$ is a non-empty finite set and $E$ is a
set of unordered pairs of elements in $V$. The members in $V$ and $E$ are called the \emph{vertices}
and \emph{edges} in $G$, respectively. The order of $G$, i.e.\ the number of vertices $|V|$, will be denoted by $n$. By $k(G)$ we shall denote the number of connected components of $G$. If $F\subseteq E$, the graph $\bar G\langle F\rangle \equiv (V,F)$ is called  the \emph{spanning subgraph} of $G$ induced by $F$, and we shall write $k(F)$ for $k(\bar G\langle F\rangle)$. If $V'\subset V$, the graph $(V',E')$ where $E' = \{\{x,y\}\in E \mid x,y\in V'\}$ is called the \emph{subgraph of $G$ induced by $V'$}. It will be denoted by $G[V']$.

\begin{defn}
Let $\lambda\in\mathbb N$. A $\lambda$-colouring of a graph $G=(V,E)$ is a map $\pi: V\to \{1,2,\dots,\lambda\}$. A $\lambda$-colouring is called \emph{proper} if for each edge $e=\{x,y\}\in E$ it holds that $\pi(x)\neq \pi(y)$. We define
$\chrom{G}(\lambda)$ to be the number of proper $\lambda$-colourings of $G$.
\end{defn}

It is well kown that the $\chi_G(\lambda)$ is a polynomial in $\lambda$.
\begin{thm}\label{chrom1}
The function $\chrom{G}$ is a polynomial, called the \emph{chromatic polynomial} of $G$, given by
$$
\chrom{G}(\lambda) = \sum_{i=1}^n a_i(G) \lambda^{i}\,,
$$
where
\begin{equation}\label{coeff}
  a_i(G) = \sum_{\substack{F\subseteq E \\ k(F) = i}} (-1)^{|F|}\,.
\end{equation}
\end{thm}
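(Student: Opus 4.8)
The plan is to prove the formula by inclusion-exclusion over the set of edges, which is the classical Whitney-type derivation. First I would fix $\lambda\in\mathbb N$ and count proper $\lambda$-colourings directly. For an arbitrary $\lambda$-colouring $\pi\colon V\to\{1,\dots,\lambda\}$ and an edge $e=\{x,y\}\in E$, say that $\pi$ \emph{violates} $e$ if $\pi(x)=\pi(y)$. Then $\chrom{G}(\lambda)$ equals the number of colourings violating no edge, so by the inclusion-exclusion principle
$$
\chrom{G}(\lambda) = \sum_{F\subseteq E} (-1)^{|F|} N_F(\lambda)\,,
$$
where $N_F(\lambda)$ is the number of $\lambda$-colourings that violate every edge in $F$ (and possibly others). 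The key combinatorial observation is that a colouring violates every edge of $F$ precisely when it is constant on each connected component of the spanning subgraph $\bar G\langle F\rangle$; since there are $k(F)$ such components and the colour of each may be chosen freely among $\lambda$ values, we get $N_F(\lambda)=\lambda^{k(F)}$.

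Substituting this into the sum gives
$$
\chrom{G}(\lambda) = \sum_{F\subseteq E} (-1)^{|F|}\lambda^{k(F)}
= \sum_{i=1}^n \Bigl(\sum_{\substack{F\subseteq E\\ k(F)=i}}(-1)^{|F|}\Bigr)\lambda^{i}\,,
$$
where the outer sum runs only over $i$ from $1$ to $n$ because $1\le k(F)\le n=|V|$ for every $F\subseteq E$ (the spanning subgraph has $n$ vertices, hence between $1$ and $n$ components). This establishes that the right-hand side, as a function of $\lambda\in\mathbb N$, agrees with the polynomial $\sum_{i=1}^n a_i(G)\lambda^i$ with $a_i(G)$ defined by \eqref{coeff}. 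Since two polynomials agreeing on all of $\mathbb N$ are identical, and $\chrom{G}$ has now been exhibited as a polynomial, the proof is complete.

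The only real point requiring care is the inclusion-exclusion bookkeeping: one must set it up so that $N_F(\lambda)$ counts colourings violating \emph{at least} the edges in $F$ (not exactly those), which is what makes the identity $N_F(\lambda)=\lambda^{k(F)}$ clean and is also what the standard inclusion-exclusion formula delivers. A short alternative to the inclusion-exclusion step, which I would mention, is induction via deletion-contraction: $\chrom{G}=\chrom{G-e}-\chrom{G/e}$ for any edge $e$, combined with the corresponding splitting of the sum in \eqref{coeff} according to whether $e\in F$ or not; but since the paper explicitly wishes to avoid leaning on deletion-contraction later, the direct inclusion-exclusion argument is the more appropriate one to record here. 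I do not expect any serious obstacle; the verification that $k(F)$ ranges in $\{1,\dots,n\}$ and that a colouring constant on components of $\bar G\langle F\rangle$ is exactly one violating all of $F$ are both immediate.
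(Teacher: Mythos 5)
Your proof is correct and is essentially the paper's own argument in different notation: the paper expands $\prod_{e\in E}\bigl(1-(1-f_e(\pi))\bigr)$ over subsets $F\subseteq E$, which is precisely the inclusion--exclusion you perform set-theoretically, with $\sum_\pi\prod_{e\in F}(1-f_e(\pi))$ playing the role of your $N_F(\lambda)=\lambda^{k(F)}$. No gaps; the two write-ups are interchangeable.
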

\begin{proof}  Define for any edge $e\in E$ the function $f_e$ on the set of colourings of $G$ by
$$
f_e(\pi) = \begin{cases} 0\;\mbox{if $\pi$ is constant on $e$}\\ 1\; \mbox{otherwise.}\end{cases}\,.
$$
Then
\begin{align*}
\chrom{G}(\lambda) = \sum_\pi \prod_{e\in E} f_e(\pi)
= \sum_\pi \prod_{e\in E} (1-(1-f_e(\pi))
&= \sum_\pi \sum_{F\subseteq E} (-1)^{|F|}\prod_{e\in F} (1-f_e(\pi))\\
&= \sum_{F\subseteq E} (-1)^{|F|}\lambda^{k(F)}\,.
\end{align*}
\end{proof}

Whitney refined this result in what is known as his \emph{broken-cycle theorem} \cite{whitney}. Let $\leq$ be an arbitrary linear ordering of the edge set $E$. A \emph{broken cycle} of $G$ is then a set of edges $F\subseteq E$ obtained by removing the maximal edge from a cycle of $G$.
\begin{thm}[Whitney 1932]\label{thm:whitney} For $i=1,\dots, n$ we have that
\begin{equation}
  a_i(G) = (-1)^{n-i} h_i(G)\,,
\end{equation}
where $h_i(G)$ is the number of spanning subgraphs of $G$ with $n-i$ edges and containing no broken cycle.
\end{thm}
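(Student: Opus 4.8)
The plan is to prove this by induction on the number of edges $|E|$, combined with the deletion–contraction principle applied to the maximal edge, following the classical Whitney argument. First I would fix the linear ordering $\leq$ on $E$ and let $e_0$ be the maximal edge. The base case $E=\emptyset$ is immediate: $\chi_G(\lambda)=\lambda^n$, so $a_n(G)=1$ and $a_i(G)=0$ for $i<n$, while there are no broken cycles and $h_n(G)=1$ (the empty spanning subgraph), $h_i(G)=0$ for $i<n$. For the inductive step, I would split the sum \eqref{coeff} according to whether $e_0\in F$ or not:
\begin{equation*}
  a_i(G) = \sum_{\substack{F\subseteq E\setminus\{e_0\}\\ k(F)=i}} (-1)^{|F|}
  \;+\; \sum_{\substack{e_0\in F\subseteq E\\ k(F)=i}} (-1)^{|F|}.
\end{equation*}
The first sum is exactly $a_i(G\setminus e_0)$, the chromatic coefficient of the edge-deleted graph. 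The second sum, after writing $F=F'\cup\{e_0\}$ and observing that contracting $e_0$ identifies its endpoints so that $k(F')$ computed in $G/e_0$ equals $k(F)$ in $G$, equals $-a_i(G/e_0)$ (here one must be mildly careful because $G/e_0$ may be a multigraph, but loops and parallel edges contribute in the standard way and \eqref{coeff} still applies with $F$ ranging over edge subsets). Thus $a_i(G)=a_i(G\setminus e_0)-a_i(G/e_0)$, the deletion–contraction relation for the coefficients; and both $G\setminus e_0$ and $G/e_0$ have fewer edges, so induction applies.

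It then remains to show that the numbers $h_i$ satisfy the same relation, i.e.\ $h_i(G)=h_i(G\setminus e_0)+h_i(G/e_0)$ once signs are matched; concretely, with $b_i(G):=(-1)^{n-i}h_i(G)$ I want $b_i(G)=b_i(G\setminus e_0)-b_i(G/e_0)$. For this I would partition the broken-cycle-free spanning subgraphs $F$ of $G$ with $n-i$ edges into those not containing $e_0$ and those containing $e_0$. A spanning subgraph $F\not\ni e_0$ is broken-cycle-free in $G$ iff it is broken-cycle-free in $G\setminus e_0$ — this needs the observation that since $e_0$ is the maximum edge, every broken cycle of $G$ that happens to lie inside $E\setminus\{e_0\}$ is also a broken cycle of $G\setminus e_0$, and conversely broken cycles of $G\setminus e_0$ are broken cycles of $G$ (the ordering restricts); care is needed with cycles through $e_0$, whose broken cycle is obtained by deleting some edge other than $e_0$ and hence could still be contained in $F$, but one checks such a set is also a broken cycle of $G\setminus e_0$. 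This gives the count $h_i(G\setminus e_0)$ for the first part. For subgraphs $F\ni e_0$, writing $F=F'\cup\{e_0\}$, I would set up a bijection with broken-cycle-free spanning subgraphs of $G/e_0$ having $n-1-i$ edges (note $G/e_0$ has $n-1$ vertices, and $i$ components in $G$ corresponds to $i$ components in the contraction), transferring the edge ordering to $G/e_0$ in the obvious way. The key point is that a cycle in $G/e_0$ either lifts to a cycle in $G$ or to a cycle in $G$ through $e_0$, and in both cases the broken-cycle condition is preserved under the correspondence $F\leftrightarrow F'$.

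The main obstacle is the bookkeeping in the contraction step: verifying that the broken-cycle-free property is exactly preserved under the bijection $F=F'\cup\{e_0\}\mapsto F'$ between spanning subgraphs of $G$ through $e_0$ and spanning subgraphs of $G/e_0$, including the subtle cases where contracting $e_0$ creates parallel edges or where a broken cycle of $G$ arises from a cycle using $e_0$. One must be precise about how the linear order on $E$ induces one on the edge set of $G/e_0$ (which is $E\setminus\{e_0\}$, possibly with identifications) and confirm that maximal edges of cycles correspond correctly; the case of a digon (parallel pair) created in $G/e_0$ corresponds to a triangle through $e_0$ in $G$ and has to be handled so that "broken cycle of length one" (a single edge, from a digon) matches a genuine broken cycle in $G$. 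Once these correspondences are nailed down, the recursion $b_i(G)=b_i(G\setminus e_0)-b_i(G/e_0)$ falls out, and since $a_i$ and $b_i$ obey the same recursion and agree on edgeless graphs, induction on $|E|$ finishes the proof.
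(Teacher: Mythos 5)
Your plan rests on two term-by-term bijections: (i) NBC spanning subgraphs of $G$ avoiding $e_0$ correspond to NBC spanning subgraphs of $G\setminus e_0$, and (ii) those containing $e_0$ correspond, via $F\mapsto F\setminus\{e_0\}$, to NBC spanning subgraphs of $G/e_0$. Neither correspondence is actually a bijection, and the step where you dismiss the difficulty is based on a false premise: you write that for a cycle through $e_0$ the broken cycle ``is obtained by deleting some edge other than $e_0$''. Since $e_0$ is the \emph{maximal} edge of $E$, it is the maximal edge of every cycle containing it, so the broken cycle of such a cycle $C$ is exactly $C\setminus\{e_0\}$ --- a subset of $E\setminus\{e_0\}$ which is forbidden in $G$ but is \emph{not} a broken cycle of $G\setminus e_0$ (the cycle $C$ no longer exists there). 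Concretely, take the triangle with edges $a<b<c$, so $e_0=c$ and the unique broken cycle of $G$ is $\{a,b\}$. For $i=1$ the NBC spanning subgraphs of $G$ with $2$ edges avoiding $c$ number $0$, while $h_1(G\setminus c)=1$ (the path $\{a,b\}$ is NBC in $G\setminus c$); and those containing $c$ number $2$ ($\{a,c\}$ and $\{b,c\}$), while $h_1(G/c)=1$. The totals agree ($0+2=1+1$), as they must since the theorem is true, but only because the two errors cancel; the bijections you would induct on do not exist, so the induction does not close. Repairing this requires a genuinely different bookkeeping, not just care with parallel edges and induced orderings.

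The paper avoids contraction altogether. On the coefficient side it splits $\{F: k(F)=i\}$ by whether a fixed edge $e$ is absent, a bridge, or a non-bridge in $F$, obtaining $a_i=b_e^i-b_e^{i-1}$ with $b_e^i$ expressed as a sum over vertex partitions of products of $a_1$ of induced subgraphs (Lemmas~\ref{lemma:subgraph} and~\ref{eq:lemma-subgraph-b}). On the combinatorial side it works entirely inside $G$: for NBC $i$-forests the case ``$e_{\max}\notin F$ and $F\cup\{e_{\max}\}$ creates a cycle'' is excluded precisely because $e_{\max}$ is globally maximal, so $F\mapsto F\cup\{e_{\max}\}$ is a bijection between the NBC $i$-forests avoiding $e_{\max}$ and the NBC $(i-1)$-forests containing it, yielding $h_i=c_{i-1}+c_i$ with $c_i$ given by the same partition formula (Lemma~\ref{lemma:forest}). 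Both families then satisfy one common recursion determined by the values on edgeless graphs, which is what closes the induction. If you want to keep a deletion--contraction flavour, you would need to reorganize your case analysis so that the problematic sets (those avoiding $e_0$ but containing a path $C\setminus\{e_0\}$) are paired off against something, which is essentially the involution idea the paper's Lemma~\ref{lemma:forest} implements.
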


We will establish, in the next three lemmas, a set of recursion relations for coefficients $a_i$ and
for coefficients $h_i$, respectively. Up to a sign factor, both sets of coefficients will be shown
to satisfy the same recursion relations, and by observing that they coincide on the empty graph we will obtain as a consequence an inductive proof of Theorem~\ref{thm:whitney}.

Recall, that an edge $e\in E$ is called a \emph{bridge} in $G=(V,E)$ if $k(E) < k(E\setminus{e})$
(i.e.\ removing $e$ increases the number of connected components of the graph), in which case we
must have $k(E\setminus{e}) = k(E)+1$. If $F\subseteq E$ we say that $e\in F$ is a bridge in $F$ if
it is a bridge in ${\bar G}\langle F\rangle$. We denote by ${\cal B}_e^i$ the collection of $F\subseteq E$ such that $e$ is a
bridge in $F$ and $k(F)=i$.

\begin{lem}\label{lemma:subgraph}
 Let $G=(V,E)$ be a graph with $E\neq \emptyset$ and fix $e\in E$. We have that
  \begin{equation}\label{eq:decomposition-a}
    a_i(G) = b_e^{i}(G) - b_e^{i-1}(G)\,,
  \end{equation}
  where $b_e^0(G) = 0$ and
	\[ b^i_e(G) = \sum_{F\in {\cal B}_e^i} (-1)^{|F|}\,, \quad \forall i\ge 1.\]
      \end{lem}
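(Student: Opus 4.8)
The plan is to split the sum defining $a_i(G)$ in \eqref{coeff} according to whether the fixed edge $e$ belongs to the subset $F$ or not. Write
\[
a_i(G) = \sum_{\substack{F\subseteq E \setminus\{e\}\\ k(F)=i}} (-1)^{|F|} + \sum_{\substack{F\subseteq E,\ e\in F\\ k(F)=i}} (-1)^{|F|}.
\]
In the second sum, every $F$ containing $e$ has $F\setminus\{e\}$ as a spanning subgraph with one fewer edge, and $k(F\setminus\{e\})$ is either $k(F)$ or $k(F)+1$; moreover $e$ is a bridge in $F$ precisely in the latter case. So I would further split the second sum into the part where $e$ is a bridge in $F$ (contributing to $\B_e^i$) and the part where it is not. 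The key bookkeeping identity is that the non-bridge part of the second sum cancels against part of the first sum.

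More concretely, the first I would do is observe the bijection $F\mapsto F\setminus\{e\}$ between $\{F\subseteq E : e\in F,\ e\text{ not a bridge in }F,\ k(F)=i\}$ and $\{F'\subseteq E\setminus\{e\} : k(F')=i,\ k(F'\cup\{e\})=i\}$, which just says adding $e$ back does not merge two components. Under this bijection the sign flips, so these two families of terms cancel exactly. What survives from the first sum is the subfamily of $F'\subseteq E\setminus\{e\}$ with $k(F')=i$ for which $k(F'\cup\{e\}) = i-1$, i.e.\ adding $e$ does merge two components; but then $e$ is a bridge in $F'\cup\{e\}$ and $k(F'\cup\{e\})=i-1$, so $F'\cup\{e\}\in\B_e^{i-1}$, and the map $F'\mapsto F'\cup\{e\}$ is a bijection onto $\B_e^{i-1}$ with $(-1)^{|F'|} = -(-1)^{|F'\cup\{e\}|}$. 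This contributes $-b_e^{i-1}(G)$. What survives from the second sum is exactly $\B_e^i$, contributing $b_e^i(G)$. Adding these gives \eqref{eq:decomposition-a}.

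Finally I would check the boundary convention: if $e$ is a bridge in $F$ with $k(F)=i$ then removing $e$ gives $i+1\ge 2$ components, which is consistent with $i\ge 1$, and there is no $F$ with $k(F)=0$, matching $b_e^0(G)=0$; this handles the case $i=1$ cleanly. The main obstacle, such as it is, is purely organizational: being careful that the three-way partition of the index set $\{F\subseteq E : k(F)=i\}$ (namely $e\notin F$; $e\in F$ and a bridge; $e\in F$ and not a bridge) together with the observation about how $k$ changes under adding/removing $e$ is set up so that the two non-$\B_e$ pieces pair off with opposite signs. No deep idea is needed beyond the sign-reversing involution, so the write-up should be short.
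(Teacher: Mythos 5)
Your argument is correct and is essentially the paper's proof: both use the three-way partition of $\{F\subseteq E: k(F)=i\}$ into ($e\notin F$), ($e$ a bridge in $F$), ($e\in F$ not a bridge), together with the sign-reversing bijection $F\mapsto F\cup\{e\}$ that pairs the $e\notin F$ class with ${\cal B}_e^{i-1}\cup{\cal C}_e^i$, so that the non-bridge terms cancel. The only cosmetic difference is that you split that bijection into two separate ones according to whether adding $e$ merges components.
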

\begin{proof}
For each subset $F$ of $E$ exactly one of the following holds:
\[
   1)\; e\notin F,\qquad 2)\; e\; \mbox{is a bridge in $F$},\qquad 3)\; e\in F,\, \mbox{but $e$ is not a bridge in $F$}\,.
\]
We therefore have a decompositon of the collection $\{ F \subseteq E | k(F) = i \}$ into the three disjoint classes:
\begin{align}
 {\cal A}_e^i &= \{ F\subseteq E \,|\, e \not \in F,\,  k(F) = i\},\nonumber \\
{\cal B}_e^i &= \{ F\subseteq E \,|\, e \in F,\,  k(F) = i,\, k(F\setminus \{e\}) = k(F) +1 \},\label{def:Bei} \\
{\cal C}_e^i &= \{ F\subseteq E \,|\, e \in F,\,  k(F) = i,\, k(F\setminus \{e\}) = k(F) \}\,.\nonumber
\end{align}
Hence, for each $i=1,\dots,n-1$ we have
\[
  a_i = \sum_{F\in{\cal A}_e^i} (-1)^{|F|} + \sum_{F \in {\cal B}_e^i} (-1)^{|F|}  + \sum_{F \in
    {\cal C}_e^i} (-1)^{|F|}.
\]
Clearly, the mapping $\phi$ defined by $\phi(F) = F\cup \{e\}$ is a bijection from
 ${\cal A}_e^i$ to ${\cal B}^{i-1}_e \cup {\cal C}_e^i$, which implies that
\[
  \sum_{F \in {\cal A}_e^i} (-1)^{|F|} = - \left(\sum_{F \in {\cal B}_e^{i-1}} (-1)^{|F|} +
  \sum_{F\in {\cal C}_e^i}(-1)^{|F|}\right).
\]
Plugging this expression into the previous formula for $a_i$, we get
\[
  a_i = \sum_{F\in {\cal B}_e^i} (-1)^{|F|} - \sum_{F\in {\cal B}^{i-1}_e} (-1)^{|F|} = b^i_e - b_e^{i-1}
\]
as desired.
\end{proof}

\begin{lem}\label{eq:lemma-subgraph-b} For $i= 1,2,3,\dots$ we have
\begin{equation}\label{eq:lemma-subgraph}
  b^i_e(G) =  - \sum_{\substack{V = V_1 \sqcup \cdots \sqcup V_{i+1}\\  e \not\in G[V_j],\,j=1,\dots,i+1}}\prod_{j=1}^{i+1}a_1( G[V_j]) \,,
\end{equation}
where $V = V_1 \sqcup \cdots \sqcup V_{i+1}$ denotes any decomposition of $V$ into $i+1$ (non-empty) disjoint subsets $V_1,\dots, V_{i+1}$.
\end{lem}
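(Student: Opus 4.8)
The plan is to unpack the definition of $b^i_e(G)$ as a sum over $F\subseteq E$ for which $e$ is a bridge and $k(F)=i$, and to organize this sum according to the partition of $V$ induced by the connected components of $\bar G\langle F\rangle$. First I would record the structural fact underlying the whole argument: if $e=\{x,y\}$ is a bridge in $\bar G\langle F\rangle$, then $x$ and $y$ lie in the same component of $\bar G\langle F\rangle$ but in different components of $\bar G\langle F\setminus\{e\}\rangle$, so that $\bar G\langle F\setminus\{e\}\rangle$ has exactly $i+1$ components, say with vertex sets $V_1,\dots,V_{i+1}$, each inducing a connected spanning subgraph via $F\setminus\{e\}$. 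Conversely, starting from a decomposition $V=V_1\sqcup\cdots\sqcup V_{i+1}$ together with a choice of edge sets $F_j\subseteq E(G[V_j])$ making each $\bar G[V_j]\langle F_j\rangle$ connected, I can reconstruct $F$ by setting $F=\{e\}\cup\bigcup_j F_j$, provided $e$ joins two distinct blocks; this reconstruction is a bijection once we also insist $e\notin G[V_j]$ for every $j$ (which is automatic from $e$ joining distinct blocks, but is the cleanest way to phrase the summation constraint).

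The second step is bookkeeping with signs and the definition of $a_1$. By Theorem~\ref{chrom1}, $a_1(G[V_j])=\sum_{F_j}(-1)^{|F_j|}$ where $F_j$ ranges over subsets of $E(G[V_j])$ with $k(F_j)=1$, i.e.\ exactly the connected spanning subgraphs of $G[V_j]$. Under the bijection above, $|F|=1+\sum_{j=1}^{i+1}|F_j|$, so $(-1)^{|F|}=-\prod_{j=1}^{i+1}(-1)^{|F_j|}$. Summing over all valid $F$ in $\mathcal B^i_e$ and factoring the sum over the independent choices of $F_1,\dots,F_{i+1}$ across a fixed decomposition yields
\[
  b^i_e(G)=\sum_{F\in\mathcal B^i_e}(-1)^{|F|}
  = -\sum_{\substack{V=V_1\sqcup\cdots\sqcup V_{i+1}\\ e\notin G[V_j],\,j=1,\dots,i+1}}\ \prod_{j=1}^{i+1}\Bigl(\sum_{F_j}(-1)^{|F_j|}\Bigr)
  = -\sum_{\substack{V=V_1\sqcup\cdots\sqcup V_{i+1}\\ e\notin G[V_j]}}\prod_{j=1}^{i+1}a_1(G[V_j]),
\]
which is exactly \eqref{eq:lemma-subgraph}. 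One should note that if some $V_j$ admits no connected spanning subgraph the inner factor $a_1(G[V_j])$ is zero, so such degenerate decompositions contribute nothing and need not be excluded by hand; likewise the empty-edge factor issue for $|V_j|=1$ is handled by $a_1$ of a single vertex being $1$.

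The main obstacle, and the place to be careful, is verifying that the map $F\mapsto (V_1,\dots,V_{i+1};F_1,\dots,F_{i+1})$ is genuinely a bijection onto the indexing set of the right-hand side. Two points need attention: (i) an ordered versus unordered decomposition subtlety — the connected components of $\bar G\langle F\setminus\{e\}\rangle$ form an unordered partition, so I must either sum over unordered partitions on the right (matching how the displayed formula should be read) or be consistent about symmetrization; since the summand $\prod_j a_1(G[V_j])$ is symmetric in the blocks this is harmless but must be stated; and (ii) one must check that the reconstructed $F$ indeed has $e$ as a bridge with $k(F)=i$ — adding the single edge $e$ between two of the $i+1$ blocks merges exactly those two, giving $k(F)=i$, and removing $e$ splits them back, so $e$ is a bridge. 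The remaining components are untouched. Once these verifications are in place the identity follows immediately from the sign computation above.
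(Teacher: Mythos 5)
Your proposal is correct and follows essentially the same route as the paper's proof: decompose each $F\in\mathcal B^i_e$ via the connected components of $\bar G\langle F\setminus\{e\}\rangle$, establish the bijection with decompositions of $V$ together with connected spanning edge sets, and factor the signed sum using $|F|=1+\sum_j|F_j|$. Your added remarks on the unordered-partition symmetry and on disconnected blocks contributing zero via $a_1$ match the paper's own observations.
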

\begin{proof}
Let $F\in {\cal B}^i_e$  and let $G_1=(V_1,F_1),\dots, G_{i+1}=(V_{i+1},F_{i+1})$ be the connected components of ${\bar G}\langle F\setminus\{e\}\rangle$. In this way, $F$ defines a decomposition of $V$ into $i+1$ disjoint sets $V_1, \dots, V_{i+1}$
such that $e\not \in G[V_j]$ for any $j=1,\dots,i+1$. Let $E_1, \dots E_{i+1}$ be the edge sets of the
vertex induced subgraphs $G[V_1], \dots, G[V_{i+1}]$, respectively. Note that $F$ decomposes as $F_1\cup \cdots\cup F_{i+1} \cup\{e\}$,
where $F_j\subseteq E_j$ for each $j$. Conversely, given a decomposition of $V$ into $i+1$ subsets as above such that no $G[V_j]$ contains $e$, then $F=F_1\cup\dots\cup F_{i+1}\cup\{e\}$ belongs to ${\cal B}^i_e$ for any collection $F_1, \dots, F_{i+1}$ of edge sets in $G[V_1],\dots, G[V_{i+1}]$, respectively, such that each $(V_j,F_j)$ is connected. Hence, we can organize the sum over $F\in B_e^i$ by aggregating
terms with the same decomposition of $V$: denoting by $k(F_j)$ the number of connected components of
$(V_j,F_j)$, we have:
\begin{multline*}
  b_e^i(G) = \sum_{F\in {\cal B}_e^i} (-1)^{|F|} =
  \sum_{\substack{V  =V_1\sqcup \cdots \sqcup V_{i+1} \\ e\not \in G[V_j],\,j=1,\dots,i+1}} \sum_{\substack{F_j\subseteq E_j,\,k(F_j)=1\\j=1,\dots,i+1}}
  (-1)^{1+ \sum_{j=1}^{i+1}|F_j| } \\ = -
  \sum_{\substack{V=V_1\sqcup \cdots\sqcup V_{i+1}\\e \not \in G[V_j],\,j=1,\dots,i+1 }}
  \prod_{j=1}^{i+1} \sum_{\substack{F_j\subseteq E_j\\k(F_j)=1}} (-1)^{|F_j|} =
  - \sum_{\substack{V=V_1\sqcup \cdots\sqcup V_{i+1}\\e \not \in G[V_j],\,j=1,\dots,i+1}} \prod_{j=1}^{i+1}a_1(G[V_j])\,.
\end{multline*}
\end{proof}
Note that only decompositions such that $G[V_j]$ is connected for all $j=1,\dots,i+1$ contribute to the right-hand side of \eqref{eq:lemma-subgraph}, since $a_1$ vanishes for disconnected graphs.

Next, we proceed to verify a similar set of recursion relations for the $h_i$.
For this purpose, assume a linear ordering of the edges of the graph $G=(V,E)$ is given and let us call a  set of edges $F\subseteq E$ an \emph{$i$-forest} if $\bar G\langle F\rangle$ has $i$ components each of which is a tree, i.e.\ $\bar G\langle F\rangle$ is an acyclic graph with $k(F)=i$. Since for each tree the number of edges is one less than the number of vertices, we have that $i=k(F)=n-|F|$ for any $i$-forest $F$. Thus every spanning
$i$-forest is a subgraph with $n-i$ edges. Conversely, since every cycle trivially contains a broken
cycle as a subset, any subgraph of $G$ which does not contain any broken cycle is an $i$-forest,
 if it has $n-i$ edges. In conclusion, $h_i(G)$ is the number
of spanning $i$-forests of $G$ containing no broken cycle.

\begin{lem}\label{lemma:forest}
  For any graph $G=(V,E)$ with a linear ordering of $E\neq \emptyset$ we have that
  \begin{equation}\label{eq:decomposition-h}
    h_i(G) = c_{i-1}(G) + c_i(G),
  \end{equation}
  where the numbers $c_i(G),\, i=1,2,3,\dots,$ are given by
  \begin{equation}\label{eq:lemma-forest}
    c_i(G) = \sum_{\substack{V = V_1 \sqcup \cdots \sqcup V_{i+1}\\ e_{\max}\not\in G[V_j],\, j=1,\dots,i+1}}
    \prod_{j=1}^{i+1} h_1(G[V_j])\,.
  \end{equation}
  and $e_{\max}$ is the maximal edge of $G$, while $c_0(G)=0$.
\end{lem}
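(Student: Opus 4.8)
The plan is to reproduce for the numbers $h_i$ the two-step argument used for the $a_i$ in Lemmas~\ref{lemma:subgraph} and~\ref{eq:lemma-subgraph-b}, but to run it through a single pair of bijections, using that in a forest \emph{every} edge is a bridge, so that there is no analogue of the class ${\cal C}_e^i$. Write $e=e_{\max}$ and recall from the discussion preceding the lemma that $h_i(G)$ is the number of spanning $i$-forests of $G$ containing no broken cycle. Such a forest $F$ either contains $e$ or it does not, so $h_i(G)=N_i^{-}+N_i^{+}$, where $N_i^{-}$ (resp.\ $N_i^{+}$) is the number of broken-cycle-free spanning $i$-forests that avoid (resp.\ contain) $e$. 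I will show $N_i^{-}=c_{i-1}(G)$ and $N_i^{+}=c_i(G)$, which is \eqref{eq:decomposition-h}; the identity $c_0(G)=0$ then follows because the only decomposition of $V$ into a single block is $V_1=V$, and $G[V]=G$ contains $e$, violating the constraint in \eqref{eq:lemma-forest}.

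For $N_i^{-}$, let $F$ be a broken-cycle-free spanning $i$-forest with $e\notin F$. Its components are trees spanning the blocks $V_1,\dots,V_i$ of a partition of $V$, and $F_j:=F\cap E_j$ is a spanning tree of $G[V_j]$, where $E_j$ is the edge set of $G[V_j]$. The constraint $e\notin G[V_j]$ is automatic: if both endpoints of $e$ belonged to one block $V_j$, then $F_j\cup\{e\}$ would contain a cycle whose maximal edge is $e$ (since $e$ is the maximal element of $E$), so $F_j\subseteq F$ would contain a broken cycle of $G$, a contradiction. Moreover each $F_j$ contains no broken cycle of $G[V_j]$: the order induced on $E_j\subseteq E$ is the restriction of the order on $E$, so a broken cycle of $G[V_j]$ is in particular a broken cycle of $G$, hence cannot lie inside $F_j\subseteq F$; thus $F_j$ is counted by $h_1(G[V_j])$. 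Conversely, given a decomposition $V=V_1\sqcup\cdots\sqcup V_i$ with $e\notin G[V_j]$ for all $j$ and broken-cycle-free spanning trees $F_j$ of the $G[V_j]$, the union $F=F_1\cup\cdots\cup F_i$ is a spanning $i$-forest with $e\notin F$, and it contains no broken cycle: a broken cycle $B=C\setminus\{\max C\}$ is a path, hence connected, so it lies entirely within one block $V_j$; then $C$ is a cycle of $G[V_j]$ and $B$ is a broken cycle of $G[V_j]$ contained in $F_j$, which is impossible. These two assignments are mutually inverse, so $N_i^{-}$ equals the sum in \eqref{eq:lemma-forest} taken over decompositions into $i$ blocks, namely $c_{i-1}(G)$.

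For $N_i^{+}$, let $F$ be a broken-cycle-free spanning $i$-forest with $e\in F$. As $e$ is a bridge in $F$, removing it produces $i+1$ tree components spanning a partition $V=V_1\sqcup\cdots\sqcup V_{i+1}$; since $e$ joins two distinct blocks, $e\notin G[V_j]$ for every $j$, and the argument of the previous paragraph shows that $F\setminus\{e\}=F_1\cup\cdots\cup F_{i+1}$ with each $F_j$ a broken-cycle-free spanning tree of $G[V_j]$. Conversely, given a decomposition into $i+1$ blocks with $e\notin G[V_j]$ and broken-cycle-free spanning trees $F_j$, the set $F=F_1\cup\cdots\cup F_{i+1}\cup\{e\}$ is a spanning $i$-forest containing $e$ (the edge $e$ joins two distinct blocks and creates no cycle), and it is broken-cycle-free: a broken cycle through $e$ cannot occur, since $e=e_{\max}$ is never a non-maximal edge of a cycle of $G$, while a broken cycle avoiding $e$ lies in $F_1\cup\cdots\cup F_{i+1}$ and, being connected, inside a single $F_j$, contradicting the choice of $F_j$. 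This bijection gives $N_i^{+}=c_i(G)$, so $h_i(G)=c_{i-1}(G)+c_i(G)$ as required.

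I expect the main obstacle to be the \emph{gluing} half of each bijection, i.e.\ verifying that the union of broken-cycle-free spanning trees of the induced subgraphs $G[V_j]$ (together with $e$ in the second bijection) is itself broken-cycle-free. This is precisely where one must exploit both that a broken cycle is connected, so it cannot straddle two blocks, and that $e=e_{\max}$ is globally maximal, so it can never be a non-maximal edge of a cycle of $G$. The remaining point---that the order induced on each $E_j$ is the restriction of the order on $E$, so that ``broken cycle of $G[V_j]$'' means exactly ``broken cycle of $G$ contained in $E_j$''---should be stated explicitly but is routine.
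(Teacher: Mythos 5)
Your proof is correct and follows essentially the same route as the paper: both split the broken-cycle-free spanning $i$-forests according to whether they contain $e_{\max}$, and identify the forests containing $e_{\max}$ (after deleting it) with tuples of broken-cycle-free spanning trees of vertex-induced subgraphs $G[V_j]$ not containing $e_{\max}$. The only cosmetic difference is that the paper counts the class avoiding $e_{\max}$ via the bijection $F\mapsto F\cup\{e_{\max}\}$ onto $\tilde{\cal B}^{i-1}_{e_{\max}}$ rather than decomposing it into components directly as you do, and your gluing step (using that a broken cycle is connected, hence lies in a single block) is in fact spelled out more carefully than in the paper.
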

\begin{proof}
Let $F$ be an $i$-forest of $G$ for some $i$, and fix $e\in E$. Then exactly one of the following is true:
\begin{enumerate}
\item $e\notin F$, and $F\cup\{e\}$ is not a forest (i.e.\ adding $e$ to $F$ creates a cycle),
\item $e\notin F$, and $F\cup\{e\}$ is an $(i-1)$-forest,
\item $e\in F$, and $F\setminus\{e\}$ is a $(i+1)$-forest.
\end{enumerate}
If we now choose $e=e_{\max}$ and $F$ is an $i$-forest such that case 1) holds, then $F$ has a broken cycle.
If we therefore consider forests which contain no broken cycle, case 1) does not occur and we can therefore decompose the set
\[{\cal E}^i = \{ F\subseteq E \,|\, F \text{ is a spanning $i$-forest with no broken cycle} \}\]
into two disjoint classes:
\begin{align*}
  \tilde{\cal A}^i_{e_{\max}} &= \{ F\in {\cal E}^i \,|\, e_{\max} \not \in F\}, \\
  \tilde{\cal B}^i_{e_{\max}} &= \{ F\in {\cal E}^i \,|\, e_{\max} \in F\} \\
\end{align*}
and, clearly, $F\mapsto F\cup\{e_{\max}\}$ is a bijection from $\tilde{\cal A}^i_{e_{\max}}$ onto $\tilde{\cal B}^{i-1}_{e_{\max}}$.
If we now define $c_i(G) = |\tilde{\cal B}^i_{e_{\max}}|$ and recall that $h_i(G) = |{\cal E}^i|$, we see that
\begin{equation}
  h_i(G) = c_{i-1}(G) + c_i(G)\,,\quad i=1,2,3,\dots\,.
\end{equation}
Note that $c_0(G)=0$ since ${\cal E}^0$ is empty. We have to show that the $c_i(G)$ given in \eqref{eq:lemma-forest} coincide with the ones
we have just defined.

Let $F\in \tilde{\cal B}^{i}_{e_{\max}}$. Then  $F\setminus\{e_{\max}\}$ is a spanning
$(i+1)$-forest and it can be written as the disjoint union of its components:
$$F\setminus\{e_{\max}\} = T_1\cup \cdots \cup T_{i+1}\,.$$
Let $V_j$ be the vertex set of $T_j$ and let $G_j = G[V_j]$ be the corresponding vertex induced subgraph of
$G$, for each $j=1,\dots,i+1$. Then $T_j$ is a spanning tree of $G_j$. Since $F$ contains no broken
cycle by assumption, neither does any of the $T_j$ and, in particular, $e_{\max} \not\in G_j$ for every $j$.

Conversely, consider a decomposition $V=V_1\sqcup \cdots \sqcup V_{i+1}$ such that
$e_{\max} \not\in G_j = G[V_j]$ for every $j=1,\dots, i+1$.
If $T_j$ is a spanning tree for $G_j$ for each $j$ then
$F=T_1\sqcup \cdots \sqcup T_{i+1}\sqcup \{e_{\max}\}$ is a spanning $i$-forest of $G$. If none of the
$T_j$ contains a broken cycle, then neither will $F$. This proves the formula.
\end{proof}
As in formula \eqref{eq:lemma-subgraph} only decompositions such that all $G[V_j]$ are connected contribute to the sum in \eqref{eq:lemma-forest}.

\begin{proof}[Proof of Theorem \ref{thm:whitney}]
With notation as in Lemmas \ref{lemma:subgraph} and \ref{lemma:forest} we define
\[
\tilde a_i(G) = (-1)^{n-i} h_i(G)\qquad\mbox{and}\qquad \tilde b_e^i(G)= (-1)^{n-i}c_i(G)
\]
for $i=1,2,\dots, n$ and $i= 0,1,\dots, n$, respectively, (where $e=e_{\rm max}$). It follows from \eqref{eq:decomposition-h} and \eqref{eq:lemma-forest}
that $\tilde a_i$ and $\tilde b^i_e$ satisfy the same recursion relations \eqref{eq:decomposition-a} and \eqref{eq:lemma-subgraph} as $a_i$ and $b^i_e$. Specialising \eqref{eq:lemma-subgraph} to $i=1$ and noting that $a_1=b_e^1$ we get
  \begin{equation}\label{eq:proof-induction-1}
    a_1(G) = - \sum_{\substack{V=V_1\sqcup V_2\\e \not \in G[V_j],\, j=1,2 }}  a_1(G[V_1])\cdot a_1(G[V_2])\,.
  \end{equation}
	Noting that for the case of the empty graph $\bar G\langle \emptyset \rangle$ it holds that
	\[
	a_1(\bar G\langle \emptyset \rangle) = \begin{cases} 1,\; \mbox{if $n=1$}\\ 0,\, \mbox{if $n>1$}\end{cases}
	\]
	this relation determines $a_1(G)$ uniquely for all graphs $G$ by induction, since the graphs $G[V_j]$ have fewer edges than $G$. In turn, relations \eqref{eq:decomposition-a} and \eqref{eq:lemma-subgraph} determine  $a_i(G)$ for $i\geq 2$.

	Since it is clear that $a_1(\bar G\langle \emptyset \rangle)=\tilde a_1(\bar G\langle \emptyset \rangle)$ and $\tilde a_1(G)=\tilde b_e^1(G)$ it follows that $a_i(G)=\tilde a_i(G)$ for all $i$ and all graphs $G$.
\end{proof}

It is a well known fact that the coefficients $a_i(G)$ alternate in sign, and that they are
numerically upper bounded by the corresponding coefficients for the complete graph of equal
order. We will now briefly show how this follows in a simple manner from the recursion relations of Lemmas~\ref{lemma:subgraph} and \ref{eq:lemma-subgraph-b} without using neither Whitney's theorem nor the deletion-contraction principle, as a consequence of the following result.

\begin{lem}\label{lem:pos-mon}
  For any graph $G$ of order $n$ and any edge $e$ of $G$ it holds that
  \begin{equation}\label{pos-mon}
		0\le (-1)^{n-i}b_e^i(G) \le (-1)^{n-i}b_e^i(K_n)\,, \quad  i=1,\dots,n ,
		\end{equation}
    where $K_n$ denotes the complete graph on $n$ vertices.
    Moreover, the first inequality is sharp if and only if $k(G)\leq i\leq n$, while the second inequality is sharp for $1\leq i \leq n-1$ unless $G=K_n$.
\end{lem}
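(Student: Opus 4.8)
The plan is to prove \eqref{pos-mon} by induction on the number of edges of $G$, running the two inequalities in parallel with the companion statement $0\le (-1)^{n-1}a_1(G)\le (-1)^{n-1}a_1(K_n)$ for the linear coefficient. The base case is the empty graph on $n$ vertices, where $a_1$ equals $1$ if $n=1$ and $0$ otherwise, matching the sign pattern $(-1)^{n-1}$ trivially. For the inductive step I would feed the decomposition of Lemma~\ref{eq:lemma-subgraph-b},
\[
  (-1)^{n-i}b_e^i(G) = \sum_{\substack{V=V_1\sqcup\cdots\sqcup V_{i+1}\\ e\notin G[V_j]}}\ \prod_{j=1}^{i+1}(-1)^{|V_j|-1}a_1(G[V_j])\,,
\]
where the sign bookkeeping uses $\sum_j(|V_j|-1)=n-(i+1)$, so that $-\prod_j(-1)^{|V_j|-1}=(-1)^{n-i}$. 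Each factor $(-1)^{|V_j|-1}a_1(G[V_j])$ is nonnegative by the inductive hypothesis applied to the subgraphs $G[V_j]$, which have strictly fewer edges than $G$ when $E\neq\emptyset$; hence every summand is $\ge 0$, giving the first inequality, and each summand is bounded above by the corresponding product for the complete graphs $K_{|V_j|}$, giving the second after noting that for $G=K_n$ the constraint $e\notin K_n[V_j]$ just forces the two endpoints of $e$ to lie in different blocks, so the sum for $G$ ranges over a sub-collection of admissible decompositions of the sum for $K_n$ with termwise-smaller entries.

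For the sharpness of the first inequality I would argue as follows. If $i<k(G)$ then no spanning subgraph $F$ with $k(F)=i$ exists at all, so $b_e^i(G)=0$ and the inequality $0\le(-1)^{n-i}b_e^i(G)$ is an equality but \emph{not} strict in the intended sense — here one must be careful about what ``sharp'' means; the statement asserts that strictness $0<(-1)^{n-i}b_e^i(G)$ holds precisely when $k(G)\le i\le n$. So I need to show (a) if $k(G)\le i\le n$ then $b_e^i(G)\ne 0$, and (b) if $i<k(G)$ then $b_e^i(G)=0$. Part (b) is immediate since $e$ being a bridge in $F$ forces $k(F)\ge 2$ and, more to the point, every $F$ with $k(F)=i<k(G)$ is impossible because adding edges cannot decrease below $k(G)$. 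For part (a), the cleanest route is to exhibit one explicit $F\in\mathcal B_e^i$ whose contribution is not cancelled; but since cancellation can occur, it is better to use the factorised formula and induct: choosing $V_1,\dots,V_{i+1}$ so that each $G[V_j]$ is connected and $e$ joins $V_1$ to $V_2$ (possible when $i\ge k(G)$ because we can split the components of $G$ into $i+1$ connected pieces, two of which contain the endpoints of $e$ in distinct blocks), each factor $(-1)^{|V_j|-1}a_1(G[V_j])>0$ strictly by the inductive sharpness hypothesis, so that term alone makes the sum strictly positive.

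The second inequality's sharpness is the most delicate point and I expect it to be the main obstacle. I must show that for $1\le i\le n-1$, if $G\ne K_n$ then $(-1)^{n-i}b_e^i(G)<(-1)^{n-i}b_e^i(K_n)$ strictly. The idea is that $G\ne K_n$ means there is a non-edge $\{x,y\}$ of $G$; I want to exhibit a decomposition $V=V_1\sqcup\cdots\sqcup V_{i+1}$ that contributes strictly positively to $b_e^i(K_n)$ but contributes $0$ (or strictly less) to $b_e^i(G)$. Two mechanisms can kill a $K_n$-term for $G$: either some $G[V_j]$ is disconnected (making $a_1(G[V_j])=0$), or some $G[V_j]$ contains the edge $e$ while $K_n[V_j]$ also does — but the latter is the same constraint for both. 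So the real mechanism is disconnectedness: I would place $x$ and $y$ together in a block $V_j$ chosen so that $G[V_j]$ is disconnected (e.g. $V_j=\{x,y\}$ when $\{x,y\}\ne e$, which is arrangeable when $i\le n-1$ since we then have at least two blocks to play with and can keep $e$ spanning two other blocks or inside a connected block), while all other blocks are singletons or otherwise connected in $G$. Care is needed in the edge cases where $e$ itself is incident to $x$ or $y$, where the roles of $e$ and the chosen non-edge interact; in those cases one picks a different non-edge if available, or splits differently. Granting the lemma, the two advertised corollaries are immediate: $a_i(G)=b_e^i(G)-b_e^{i-1}(G)$ from Lemma~\ref{lemma:subgraph} together with \eqref{pos-mon} gives $(-1)^{n-i}a_i(G)\ge 0$ hence alternating signs, and $|a_i(G)|=|b_e^i(G)|+|b_e^{i-1}(G)|\le |b_e^i(K_n)|+|b_e^{i-1}(K_n)|=|a_i(K_n)|$ using that $b_e^i$ and $-b_e^{i-1}$ have the same sign $(-1)^{n-i}$.
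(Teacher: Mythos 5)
Your proposal is correct and follows essentially the same route as the paper's proof: induction on the number of edges, the rewriting of \eqref{eq:lemma-subgraph} as $(-1)^{n-i}b_e^i(G)=\sum\prod_j(-1)^{|V_j|-1}a_1(G[V_j])$ with each factor nonnegative, termwise comparison against $K_n$, and explicit decompositions witnessing the two sharpness claims. The one point you leave open --- the interaction between $e$ and the non-edge $\{x,y\}$ in the strict upper bound --- dissolves exactly as the paper handles it: since $\{x,y\}$ is a non-edge and $e$ is an edge, $e\neq\{x,y\}$, so some endpoint $z$ of $e$ lies outside $\{x,y\}$, and taking $V_1=\{x,y\}$, $V_2=\{z\}$ already forces $e\not\subseteq V_j$ for every block, with no need to switch to a different non-edge.
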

\begin{proof}
  We shall prove the statement by induction. Consider first the case $i=1$ and
note that the recursion relation \eqref{eq:proof-induction-1} can be rewritten as
 \begin{equation}\label{eq:proof-induction-11}
    d(G) =  \sum_{\substack{V=V_1\sqcup V_2\\e \not \in G[V_j],\, j=1,2 }}  d(G[V_1])\cdot d(G[V_2])\,,
  \end{equation}
where $d(G)= (-1)^{n-1} a_1(G)$. Since
\[
d(V,\emptyset) = \begin{cases} 1\quad \mbox{if $|V|=1$}\\ 0\quad \mbox{if $|V|>1$}\,,\end{cases}
\]
 it follows by induction on the number of edges in $G$ that $d(G)\geq 0$ for all $G$. If $G$ is connected it is easy to see, by successively deleting edges in paths connecting the endpoints of $e$, starting with $e$, that there exist decompositions $V=V_1\sqcup V_2$ such that $G[V_1]$ and $G[V_2]$ are both connected and do not contain $e$. This implies, again by induction, that $d(G)>0$ if $G$ is connected. On the other hand, if $G$ is disconnected, the sum in \eqref{eq:proof-induction-11} is empty and so $d(G)=0$.

Using \eqref{eq:lemma-subgraph} in the form
\begin{equation}\label{eq:lemma-subgraph-11}
 (-1)^{n-i} b^i_e(G) =   \sum_{\substack{V = V_1 \sqcup \cdots \sqcup V_{i+1}\\  e \not\in G[V_j],\,j=1,\dots,i+1}}\prod_{j=1}^{i+1}d(G[V_j]) \,,
\end{equation}
we get that $(-1)^{n-i} b^i_e(G)\geq 0$. Moreover, if $G$ has $k$ connected components, the sum on the right-hand side is empty if $i<k$ whereas positive terms occur for $k\leq i\leq n$ and hence $(-1)^{n-i} b^i_e(G)> 0$ in this case.

   Moreover, considering $G$ as a subgraph of $K_n$ and comparing the formula \eqref{eq:lemma-subgraph-11} for $G$ and the corresponding one for $K_n$, we see that each summand in the former by the induction hypothesis can be bounded from above by a corresponding term in the latter, since all $K_n[V_j]$ are complete graphs. Hence, the rightmost bound in \eqref{pos-mon} follows.

	Finally, if $G$ is not the complete graph, we have $n\geq 2$ and there is an edge $f=\{x,y\}$ in $K_n$ that is not an edge of $G$. For $i\leq n-1$ we choose a decomposition $V=V_1\sqcup\dots\sqcup V_{i+1}$ in \eqref{eq:lemma-subgraph-11}, such that $V_1=\{x,y\}$ and $V_2=\{z\}$, where $z$ is an endpoint of $f$ that is not in $V_1$, and $V_3,\dots,V_{i+1}$ are arbitrary. Then $G[V_1]$ is disconnected and therefore this term in \eqref{eq:lemma-subgraph-11} vanishes, while the corresponding term for $K_n$ is strictly positive. This proves the last statement of the proposition.
\end{proof}
\begin{cly}
  For any graph $G$ with $n$ vertices it holds for $i=1,2,\dots,n$ that
 \begin{equation}\label{pos-mon1}
 0\leq (-1)^{n-i}(a_1(G) + a_2(G)+\dots + a_i(G))\leq (-1)^{n-i}(a_1(K_n) + a_2(K_n)+\dots + a_i(K_n))\,,
 \end{equation}
and
 \begin{equation}\label{pos-mon2}
 0\leq (-1)^{n-i} a_i(G)\leq (-1)^{n-i} a_i(K_n).
 \end{equation}
   Moreover, in both cases the first inequality is sharp if and only if $k(G)\leq i\leq n$, while the second inequality is sharp for $1\leq i \leq n-1$ unless $G=K_n$.
\end{cly}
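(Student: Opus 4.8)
The plan is to read off both chains of inequalities from Lemma~\ref{lem:pos-mon} by using the recursion \eqref{eq:decomposition-a} to re-express the quantities on the left of \eqref{pos-mon1} and \eqref{pos-mon2} through the $b_e^j$. Throughout I fix an edge $e$ of $G$ (assuming $E\neq\emptyset$, the setting of Lemma~\ref{lemma:subgraph}), and I use the same $e$, now regarded as an edge of $K_n$, for the complete graph.

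First I would treat \eqref{pos-mon1}. Summing $a_j(G)=b_e^j(G)-b_e^{j-1}(G)$ over $j=1,\dots,i$ and using $b_e^0(G)=0$ gives the telescoping identity $a_1(G)+\cdots+a_i(G)=b_e^i(G)$, and the same identity holds for $K_n$. Hence \eqref{pos-mon1} is literally the inequality \eqref{pos-mon}, so both of its inequalities and the claimed sharpness conditions (in particular that the first is strict exactly for $k(G)\le i\le n$) are precisely what Lemma~\ref{lem:pos-mon} asserts. Nothing more is needed in this case.

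Next I would handle \eqref{pos-mon2}. Using \eqref{eq:decomposition-a} and $-(-1)^{n-i}=(-1)^{n-(i-1)}$ one gets
\[
  (-1)^{n-i}a_i(G)=(-1)^{n-i}b_e^i(G)+(-1)^{n-(i-1)}b_e^{i-1}(G),
\]
which exhibits $(-1)^{n-i}a_i(G)$ as a sum of two terms, each nonnegative by Lemma~\ref{lem:pos-mon} applied at superscript $i$ and at superscript $i-1$ respectively (the second term being $0$ when $i=1$). This gives the left inequality in \eqref{pos-mon2}, and bounding each term above by its $K_n$ analogue and reassembling via the same identity for $K_n$ gives the right inequality. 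The sharpness claims then follow termwise: a sum of two nonnegative numbers is zero precisely when both are zero, so the left inequality in \eqref{pos-mon2} is strict exactly when at least one of $(-1)^{n-i}b_e^i(G)$, $(-1)^{n-(i-1)}b_e^{i-1}(G)$ is positive, and by the first part of Lemma~\ref{lem:pos-mon} the union of the two corresponding index sets is again $\{\,i:k(G)\le i\le n\,\}$; similarly the right inequality is an equality exactly when each of the two terms matches its $K_n$ value, and the second part of Lemma~\ref{lem:pos-mon} determines for which $i$, and for which graphs, this can fail.

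The whole argument is short once Lemma~\ref{lem:pos-mon} is in hand, so the only real work — and the only place I would expect any subtlety — is the bookkeeping behind the sharpness assertions: checking that the two index ranges contributed by the two summands in the displayed identity amalgamate into the single claimed range, and handling the endpoints near $i=n$ with care (for instance $b_e^{n-1}$ and $b_e^n$ do not depend on $G$, which in particular forces equality in both chains at $i=n$ and therefore rules out sharpness of the second inequality there).
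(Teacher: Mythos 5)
Your proof is correct and essentially identical to the paper's: both use the telescoping identity $b_e^i(G)=a_1(G)+\dots+a_i(G)$ (from \eqref{eq:decomposition-a} and $b_e^0=0$) to read \eqref{pos-mon1} off as a restatement of \eqref{pos-mon}, and both obtain \eqref{pos-mon2} and its sharpness claims termwise from the decomposition $(-1)^{n-i}a_i(G)=(-1)^{n-i}b_e^i(G)+(-1)^{n-i+1}b_e^{i-1}(G)$ together with Lemma \ref{lem:pos-mon}. Your closing observation that $b_e^{n-1}$ and $b_e^n$ do not depend on $G$ is a correct extra remark not made in the paper, and it in fact shows that the second inequality of \eqref{pos-mon1} is already an equality at $i=n-1$ for every graph with at least one edge, so the stated sharpness range there is slightly too generous --- but that is an issue with the statement as given, not with your argument.
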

\begin{proof}
Using that
\begin{equation}\label{a-b-relation}
 a_i(G) = b_e^i(G) - b_e^{i-1}(G)
\end{equation}
by \eqref{eq:decomposition-a} and that $b_e^0(G)=0$ it follows that
$$
b_e^i(G) = a_1(G) +\dots + a_i(G)\,.
$$
In particular, $b_e^i(G)$ is independent of $e$ and \eqref{pos-mon1} is just a rewriting of \eqref{pos-mon}. Writing \eqref{a-b-relation} as
$$
(-1)^{n-i}a_i(G) = (-1)^{n-i}b_e^i(G) +  (-1)^{n-i+1}b_e^{i-1}(G)
$$
the inequalities \eqref{pos-mon2} follow immediately from \eqref{pos-mon}. Moreover, the first inequality of \eqref{pos-mon2} is an equality if and only if $b_e^i(G)=b_e^{i-1}(G)=0$ and hence if and only if $0\leq i < k(G)$. Similarly, Lemma \ref{pos-mon} gives that if the second inequality of \eqref{pos-mon2} is an equality then $b_e^i(G)=b_e^{i}(K_n)$ and $b_e^{i-1}(G)=b_e^{i-1}(K_n)$ and hence $G=K_n$. This completes the proof of the corollary.
\end{proof}

It should be noted that the inequality \eqref{pos-mon1} can also easily be deduced from the (highly
non-trivial) unimodularity of the coefficients of $\chi_G$ \cite{read,huh} and the fact that
$a_1+a_2 + \dots + a_n =0$.

\begin{rem}
The alternating sign property of the $a_i$ plays a role, for the special case $i=1$, in the Mayer expansion for the hard-core lattice gas in statistical mechanics (also known as the cluster expansion of the polymer partition function)\cite{ueltschi2004cluster, Scott2005,  friedli_velenik_2017}. Briefly, the model is defined by a finite set $\Gamma$ which plays the role of the ``single-particle'' state space, a list of complex weights $w = (w_\gamma)_{\gamma \in \Gamma}$, and an interaction $W:\Gamma\times \Gamma \to \{0,1\}$, which is symmetric and satisfies $W(\gamma,\gamma)=0$ for all $\gamma \in \Gamma$.
Given a multiset $X=\{\gamma_1,\dots, \gamma_n\}$ of elements of $\Gamma$ (where each $\gamma_i$ can appear more than once), we define the simple graph $G[X] \subseteq K_n$ as the graph on $n$ vertices such that $i$ is adjacent to $j$ if $i\neq j$ and $W(\gamma_i,\gamma_j) = 0$.
A subset $X$ of $\Gamma$ is said to be \emph{independent} if $G[X]$ has no edges. The partition function is then given by
\begin{equation}
    Z_\Gamma(w) = \sum_{X\subseteq \Gamma} \Big(\prod_{\gamma \in X} w_\gamma \Big) \prod_{\{\gamma, \gamma'\} \subseteq X} W(\gamma, \gamma') =
    \sum_{\substack{X\subseteq \Gamma \\ X \text{ independent}}} \prod_{\gamma\in X} w_\gamma,
\end{equation}
which is the (generalized) independent-set polynomial of $G[\Gamma]$ (the standard independent-set polynomial is given when $w$ is taken to be constant)\cite{Scott2005}. The Mayer expansion gives a formal series expansion for $\log Z_\Gamma$ \cite[Proposition 5.3]{friedli_velenik_2017}:
\begin{equation}\label{eq:cluster-expansion}
    \log Z_\Gamma(w) = \sum_{n\ge 1} \frac{1}{n!} \sum_{\gamma_1,\dots,\gamma_n \in \Gamma}
     a_1(G[\gamma_1,\dots,\gamma_n]) \prod_{i=1}^n w_{\gamma_i} .
\end{equation}
The alternating sign property of $a_1$ implies in particular that the coefficient of order $n$ of $\log Z_G(w)$, seen as a polynomial in the variables $( w_\gamma)_{\gamma \in \Gamma}$, has sign $(-1)^{n-1}$. This holds in greater generality \cite[Proposition 2.8]{Scott2005}, and has important implications for proving the convergence of the formal series \eqref{eq:cluster-expansion} \cite{Pfister1991}.
\end{rem}

\section{The recursion relations for hypergraphs}
\label{sec:hypergraphs}

Let $H$ be a \emph{hypergraph}, that is $H=(V,E)$ where $V$ is a finite non-empty set of \emph{vertices} and $E$ is a set of subsets of $V$, called \emph{edges}. We assume all edges have cardinality at least $2$ (i.\,e. $H$ has no loops) and will denote $|V|$ by $n$.

A hypergraph $H'=(V',E')$ is a \emph{subgraph} of $H$ if $V'\subseteq V$ and $E'\subseteq E$. If
$$
E'= \{ e\in E\mid e\subseteq V'\}
$$
we call $H'$ the subgraph spanned by $V'$ and denote it by $H[V']$. If
$$
V' = \bigcup_{e\in E'} e
$$
we call $H'$ the subgraph spanned by $E'$ and denote it by $H\langle E'\rangle$. Finally, in case $V=V'$ we call $H'$ a \emph{spanning subgraph} of $H$ and denote it by $\bar H\langle E'\rangle$.

Two different vertices $x,y\in V$ are called \emph{neighbours} in $H$ if $x,y\in e$ for some $e\in
E$. A vertex $x$ is \emph{connected} to a vertex $y$ if either $x=y$ or there exists a finite sequence $x_1,x_2,\dots x_k$ of vertices such that $x_i$ and $x_{i+1}$ are neighbours for $i=1,\dots,k-1$ and $x_1=x$ and $x_k=y$. Clearly, connectedness is an equivalence relation on $V$. Calling the equivalence classes $V_1,\dots, V_N$ and letting $E_i$ be the set of edges containing only vertices of $V_i$, we have that $H_i=(V_i,E_i)$ is a hypergraph and
$$
V=\bigcup_{i=1}^N V_i \,,\qquad E=\bigcup_{i=1}^N E_i\,.
$$
If $N=1$ we call $H$ \emph{connected}. Evidently, $H_1,\dots,H_N$ are connected. They are called the \emph{connected components} of $H$ and their number is denoted by $k(H)$. Again, we shall use the notation $k(F)$ for $k(\bar H\langle F\rangle)$.

\begin{defn}
Let $\lambda\in\mathbb N$. A $\lambda$-colouring of a hypergraph $H=(V,E)$ is a map $\pi: V\to \{1,2,\dots,\lambda\}$. A $\lambda$-colouring is called proper if for each edge $e\in E$ there exist vertices $x,y\in e$ such that $\pi(x)\neq \pi(y)$. We define
$\chi_H(\lambda)$ to be the number of proper $\lambda$-colourings of $H$.
\end{defn}

Repeating the proof of Theorem \ref{chrom1} we obtain

\begin{thm}\label{chrom2}
The function $\chrom{H}$ is a polynomial, called the \emph{chromatic polynomial} of $H$, given by
$$
\chi_H(\lambda) = \sum_{F\subseteq E} (-1)^{|F|} \lambda^{k(F)}\,.
$$
\end{thm}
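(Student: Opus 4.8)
The plan is to mimic exactly the proof of Theorem~\ref{chrom1}, since the hypergraph case introduces no genuinely new structure at this level of generality. The only thing that changes is the local condition defining when a colouring is ``bad'' on an edge: for a graph edge $e=\{x,y\}$ this meant $\pi(x)=\pi(y)$, whereas for a hyperedge $e$ it means $\pi$ is constant on $e$. Everything else in the argument is purely set-theoretic and goes through verbatim.

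First I would define, for each edge $e\in E$, the function $f_e$ on the set of all $\lambda$-colourings $\pi\colon V\to\{1,\dots,\lambda\}$ by $f_e(\pi)=0$ if $\pi$ is constant on $e$ and $f_e(\pi)=1$ otherwise. By the definition of proper colouring for hypergraphs, $\pi$ is proper if and only if $f_e(\pi)=1$ for every $e\in E$, hence $\chi_H(\lambda)=\sum_\pi\prod_{e\in E}f_e(\pi)$. Next I would write $f_e=1-(1-f_e)$ and expand the product over $e\in E$, obtaining
\[
\chi_H(\lambda)=\sum_\pi\sum_{F\subseteq E}(-1)^{|F|}\prod_{e\in F}(1-f_e(\pi)).
\]
The key combinatorial observation is then that $\prod_{e\in F}(1-f_e(\pi))$ equals $1$ precisely when $\pi$ is constant on every edge in $F$, and $0$ otherwise; and ``$\pi$ constant on every $e\in F$'' is equivalent to ``$\pi$ constant on each connected component of $\bar H\langle F\rangle$'', because connectedness in the hypergraph sense is exactly the transitive closure of the neighbour relation. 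The number of such $\pi$ is $\lambda^{k(F)}$, one free colour choice per component. Swapping the order of summation yields $\chi_H(\lambda)=\sum_{F\subseteq E}(-1)^{|F|}\lambda^{k(F)}$, which in particular exhibits $\chi_H$ as a polynomial in $\lambda$.

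There is essentially no obstacle here; the one point that deserves a sentence of care is the equivalence between ``$\pi$ constant on each edge of $F$'' and ``$\pi$ constant on each connected component of $\bar H\langle F\rangle$'', which is where the hypergraph notion of connectedness (defined via chains of neighbours through edges) is used, and where one should note that isolated vertices of $\bar H\langle F\rangle$ also count as components and each contributes a factor $\lambda$. This is the analogue of the line ``$\sum_\pi\prod_{e\in F}(1-f_e(\pi))=\lambda^{k(F)}$'' in the proof of Theorem~\ref{chrom1}, and the verification is identical once one unwinds the definition of $k(F)$ given just before the statement.
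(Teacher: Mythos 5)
Your proof is correct and is exactly the argument the paper intends: the paper's ``proof'' of Theorem~\ref{chrom2} is literally the sentence ``Repeating the proof of Theorem~\ref{chrom1} we obtain,'' and your write-up carries out that repetition, correctly identifying the one point needing care (that $\pi$ constant on every edge of $F$ is equivalent to $\pi$ constant on each connected component of $\bar H\langle F\rangle$, giving $\lambda^{k(F)}$ choices).
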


Thus, the coefficients $a_i(H),\, i=1,2,3,\dots,n,$ of $\chi_H$ are given by the same formula \eqref{coeff} as for graphs.

Now, fix $e\in E$ and let
\begin{align*}
{\cal A}_e^i &= \{F\subseteq E\mid e\notin F, k(F)=i\}\\
{\cal B}_e^{i,j} &= \{F\subseteq E\mid e\in F, k(F) =i, k(F\setminus\{e\}) = j\}\,.
\end{align*}
Note that ${\cal B}_e^{i,j}=\emptyset$ if $i>j$ and, if $F\in{\cal A}_e^i $, then $F\cup \{e\} \in{\cal B}_e^{j,i}$ for some $j\leq i$ yielding a bijective correspondence between ${\cal A}_e^{i}$ and $\cup_{j\leq i}{\cal B}_e^{j,i}$. Hence, we have
\begin{equation}
\sum_{F\in {\cal A}_e^i} (-1)^{|F|} = - \sum_{j=1}^i \sum_{F\in{\cal B}_e^{j,i}} (-1)^{|F|}\,.
\end{equation}
Using
\begin{equation}
 a_i = \sum_{F\in {\cal A}_e^i} (-1)^{|F|} + \sum_{j=i}^n \sum_{F\in{\cal B}_e^{i,j}} (-1)^{|F|}
\end{equation}
it follows that
 \begin{equation}\label{I}
a_i = \sum_{j>i} b_e^{i,j} - \sum_{j<i} b_e^{j,i}\,,
\end{equation}
where
\begin{equation}\label{defb}
b_e^{i,j} = \sum_{F\in {\cal B}_e^{i,j}} (-1)^{|F|}\,.
\end{equation}
In particular, we have
\begin{equation}\label{I'}
a_1 = \sum_{j=2}^n b_e^{1,j}\,.
\end{equation}

\begin{prop}\label{bformula}
For $i<j$ it holds that
\begin{equation}\label{II}
b_e^{i,j} =  - \sidesum{i}_{V_1\sqcup\dots\sqcup V_j=V}\prod_{k=1}^j a_1(H[V_k])\,,
\end{equation}
where the sum is over all decompositions of $V$ into j (non-empty) disjoint subsets such that $e$ intersects exactly $j-i+1$ of them.
\end{prop}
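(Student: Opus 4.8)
The plan is to mimic the proof of Lemma~\ref{eq:lemma-subgraph-b}, the only genuinely new ingredient being an analysis of how deleting the single edge $e$ from a subgraph can break one connected component into \emph{several} pieces (a ``multi-bridge'' rather than an ordinary bridge). Concretely, I would set up a bijection between ${\cal B}_e^{i,j}$ and the set of tuples $(V_1,\dots,V_j;F_1,\dots,F_j)$ where $V=V_1\sqcup\cdots\sqcup V_j$ is a decomposition such that $e$ meets exactly $j-i+1$ of the parts, and each $F_k$ is a set of edges of $H[V_k]$ with $(V_k,F_k)$ connected.

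For the forward direction, fix $F\in{\cal B}_e^{i,j}$ and look at the connected components of $\bar H\langle F\setminus\{e\}\rangle$. Because $e\in F$, all vertices of $e$ lie in one and the same component $C$ of $\bar H\langle F\rangle$, while the other $i-1$ components of $\bar H\langle F\rangle$ contain no vertex of $e$ and are therefore unaffected by deleting $e$. Hence the components of $\bar H\langle F\setminus\{e\}\rangle$ lying inside $C$ number $j-(i-1)=j-i+1$, and since $i<j$ each of them must contain a vertex of $e$: a component of $\bar H\langle F\setminus\{e\}\rangle$ disjoint from $e$ would already be a component of $\bar H\langle F\rangle$, forcing it to equal $C$ and $j=i$. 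Taking $V_1,\dots,V_j$ to be the vertex sets of the $j$ components of $\bar H\langle F\setminus\{e\}\rangle$ and $F_k$ the corresponding edge sets (each edge of $F\setminus\{e\}$ lies within a single component, hence in some $H[V_k]$), we obtain a tuple of the required type; note that $e$ meeting at least two parts makes the condition ``$e\notin H[V_k]$'' of the graph version automatic, which is why it does not appear in \eqref{II}.

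The converse is straightforward: given a decomposition with $e$ meeting exactly $j-i+1$ parts and connected spanning subgraphs $(V_k,F_k)$ of the $H[V_k]$, the set $F=\{e\}\cup F_1\cup\cdots\cup F_j$ has $\bar H\langle F\setminus\{e\}\rangle$ equal to the disjoint union of the $(V_k,F_k)$, so $k(F\setminus\{e\})=j$, and adding $e$ merges exactly the $j-i+1$ parts it meets while leaving the other $i-1$ alone, so $k(F)=i$; thus $F\in{\cal B}_e^{i,j}$, and the two assignments are mutually inverse. Since $|F|=1+\sum_{k=1}^j|F_k|$ along the bijection, summing $(-1)^{|F|}$ over ${\cal B}_e^{i,j}$ — first over the decomposition, then independently over each $F_k$ with $k(F_k)=1$ — produces the sign $-1$ times a product of the sums $\sum_{k(F_k)=1}(-1)^{|F_k|}$, each of which equals $a_1(H[V_k])$ by Theorem~\ref{chrom2} (i.e.\ \eqref{coeff} with $i=1$). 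This is precisely \eqref{II}; decompositions for which some $H[V_k]$ is disconnected drop out, consistently with $a_1$ vanishing on disconnected hypergraphs.

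The main obstacle is the counting in the forward direction: establishing that deleting $e$ splits its component into exactly $j-i+1$ pieces, each meeting $e$. In the graph case a bridge always splits one component into exactly two, so the decompositions in \eqref{eq:lemma-subgraph} have a fixed shape; here the number of pieces is governed by the difference $j-i$, which is exactly what the side-restricted sum $\sidesum{i}$ in \eqref{II} records. Once this is in place, the rest is a routine transcription of Lemma~\ref{eq:lemma-subgraph-b}.
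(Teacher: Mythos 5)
Your proposal is correct and follows essentially the same route as the paper: the same bijection between ${\cal B}_e^{i,j}$ and tuples consisting of a decomposition $V=V_1\sqcup\dots\sqcup V_j$ with $e$ meeting exactly $j-i+1$ parts together with connected spanning subgraphs $(V_k,F_k)$ of the $H[V_k]$, followed by the same factorization of the sign sum into a product of $a_1(H[V_k])$'s. The only difference is cosmetic: you spell out in more detail the counting argument that deleting $e$ splits its component into exactly $j-i+1$ pieces each meeting $e$, which the paper asserts more briefly.
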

\begin{proof}
Let $F\in {\cal B}_e^{i,j}$. Then $\bar H\langle F\rangle$ has $i$ components $C_1,\dots,C_i$, whereas $\bar H\langle F\setminus\{e\}\rangle$ has $j$ components $H_1=(V_1,F_1),\dots, H_j=(V_j,F_j)$ which are connected spanning subgraphs of $H[V_1],\dots, H[V_j]$, respectively.  Indeed, we have $e\in C_m \equiv (V',F')$ for some $m=1,\dots,i$, and $(V',F'\setminus\{e\})$ then has $j-i+1$ components which together with $\{C_1,\dots,C_{m-1},C_{m+1},\dots,C_i\}$ make up $\{H_1,\dots,H_j\}$, and $e$ intersects exactly those $V_k$ which originate from $C_m$ by deleting $e$.

On the other hand, given a decomposition $V_1\sqcup\dots\sqcup V_j$ of $V$ and connected spanning
subgraphs $H_1=(V_1,F_1),\dots,H_j=(V_j,F_j)$ of $H[V_1],\dots, H[V_j]$, respectively, such that $e$
intersects exactly $j-i+1$ of $V_1,\dots, V_j$, we get that $F_1\cup\dots\cup F_j\cup\{e\}\in{\cal
  B}_e^{i,j}$ and the mapping $\psi$ defined by
$$
\psi(\{H_1,\dots, H_j\}) = F_1\cup\dots\cup F_j\cup\{e\}
$$
is a bijection  onto ${\cal B}_e^{i,j}$.

Since
$$
(-1)^{|F_1\cup\dots\cup F_j\cup\{e\}|} = - \prod_{k=1}^j (-1)^{|F_k|}\,,
$$
the claim follows upon noting that $a_1(H[V'])=0$ if $H[V']$ is not connected.
\end{proof}

Setting $i=1$ and summing over $j$ in \eqref{II} we get
\begin{equation}\label{II'}
a_1(H) = - \sum_{j=2}^n \sidesum{1}_{V_1\sqcup\dots\sqcup V_j=V}\prod_{k=1}^j a_1(H[V_k])\
\end{equation}
which determines $a_1(H)$ inductively for any hypergraph $H$, since $H[V_1],\dots, H[V_j]$ all have fewer edges than $H$ and we obviously have
\begin{equation}\label{III}
a_1(\bar{H}\langle \emptyset \rangle) = \begin{cases} 1\; \mbox{if $|V| = 1$} \\ 0\; \mbox{if $|V|>1$}\,.\end{cases}
\end{equation}

Once $a_1$ is known we obtain $b_e^{i,j}(H)$ for any $H$ from \eqref{I'} and consequently $a_i(H)$ from \eqref{I}. Hence, equations \eqref{I}, \eqref{II} and \eqref{III} determine all $a_i$ (as well as all $b_e^{i,j}$).

We will now present a generalization of Whitney's broken cycle theorem for hypergraphs.
\begin{defn}\label{delta}
Let $H=(V,E)$ be a hypergraph and fix some linear ordering $\leq$ of $E$. A non-empty set $F\subseteq E$ is called \emph{broken-cyclic} in $H$ with respect to $\leq$ if it fulfils the following property

\smallskip

($\bigstar$)\quad $H\langle F\rangle$ is connected and there exists an edge $e_0\subseteq \bigcup_{f\in F} f$ such that $e_0 > \mbox{max}\, F$.
\end{defn}

\begin{lem}\label{stardec}
Assume $H=(V,E)$ is a hypergraph with connected components $H_1(V_1,E_1),$ \dots, $H_N=(V_N,E_N)$. Then $F\subseteq E$ is broken-cyclic in $H$ if and only if $F\subseteq E_i$ and $F$ is broken-cyclic in $H_i$ for some $i=1,\dots,N$, with ordering of edges inherited from that of $H$.
\end{lem}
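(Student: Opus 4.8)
The plan is to prove the equivalence in Lemma~\ref{stardec} directly from Definition~\ref{delta}, by exploiting the fact that the connected components $H_1,\dots,H_N$ partition both the vertex set $V$ and the edge set $E$, and that connectedness (for the subgraph spanned by a set of edges) is a ``local'' notion with respect to this partition.

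First I would prove the easy direction: suppose $F\subseteq E_i$ and $F$ is broken-cyclic in $H_i$ for some $i$. Then $H_i\langle F\rangle$ is connected, and since $H_i\langle F\rangle = H\langle F\rangle$ (the subgraph spanned by $F$ depends only on $F$, not on the ambient hypergraph), $H\langle F\rangle$ is connected. The witnessing edge $e_0\in E_i$ with $e_0\subseteq\bigcup_{f\in F}f$ and $e_0 > \max F$ (maximum taken in $E_i$, hence also in $E$ since the ordering is inherited) is an edge of $H$, so property ($\bigstar$) holds for $F$ in $H$. Thus $F$ is broken-cyclic in $H$.

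For the converse, suppose $F\subseteq E$ is broken-cyclic in $H$. Then $H\langle F\rangle$ is connected and non-empty, so all edges of $F$ lie in a single connected component of $H$: indeed, any two vertices appearing in edges of $F$ are connected in $H\langle F\rangle$, hence connected in $H$, hence lie in the same class $V_i$; therefore $F\subseteq E_i$ for some $i$. Now I must check that $F$ is broken-cyclic in $H_i$. Connectedness of $H_i\langle F\rangle = H\langle F\rangle$ is immediate. For the witnessing edge $e_0$: we have $e_0\subseteq\bigcup_{f\in F}f\subseteq V_i$, so $e_0$ is an edge of $H_i$ by the definition of $E_i$ as the set of edges of $H$ contained in $V_i$; and $e_0 > \max F$ in the ordering of $E$, which restricts to the ordering of $E_i$. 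Hence ($\bigstar$) holds for $F$ in $H_i$, and $F$ is broken-cyclic in $H_i$.

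The only point requiring a little care — and the closest thing to an obstacle — is the claim that $\bigcup_{f\in F}f$ is contained in a single class $V_i$ when $H\langle F\rangle$ is connected; this uses that $F$ is non-empty (built into Definition~\ref{delta}) together with the definition of $H\langle F\rangle$ and the fact that the $V_i$ are the connectedness equivalence classes. Everything else is a matter of unwinding the definitions of ``subgraph spanned by an edge set,'' of $E_i$, and of the inherited ordering, so the proof is short.
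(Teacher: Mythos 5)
Your proof is correct and follows essentially the same route as the paper's: both directions reduce to the observations that $H\langle F\rangle$ depends only on $F$, that connectedness of $H\langle F\rangle$ forces $F$ into a single component, and that the witnessing edge $e_0\subseteq\bigcup_{f\in F}f\subseteq V_i$ is automatically an edge of $H_i$. The paper states this more tersely (declaring the converse ``obvious''), but the content is identical.
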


\begin{proof}
If $F$ is broken-cyclic in $H$ then $H\langle F\rangle$ is connected and hence is a subgraph of some $H_i$. Consequently, if $e_0\subseteq \bigcup_{f\in F} f$  it is an edge of $H_i$ and it follows that $F$ is broken-cyclic in $H_i$.

The converse, that a set of edges $F$ which is broken-cyclic in $H_i$ is also broken-cyclic in $H$, is obvious.
\end{proof}

From now on $H=(V,E)$ is a fixed hypergraph with some linear ordering $\leq$ on $E$ and $\cal D$ is some subsetset of $2^E$ consisting of broken-cyclic subsets in $H$ with respect to $\leq$. Moreover, if $H'=(V',E')$ is a subgraph of $H$ it will be assumed that $E'$ is ordered with respect to the restriction of $\leq$ to $E'$.

We define
\begin{align}
{\cal E}_{\cal D} &= \{F\subseteq E\mid A\nsubseteq F \;\mbox{for all $A\in \cal D$}\} \\
{\cal E}^i_{\cal D} &= \{F\subseteq E\mid k(H\langle F\rangle) = i \}\cap {\cal E}_{\cal D}
\end{align}
and set
\begin{equation}
a_{i,\cal D} = \sum_{F\in{\cal E}^i_{\cal D}} (-1)^{|F|}\,,
\end{equation}
for $i=1,2,3,\dots,n$. Note that $a_i = a_{i,\emptyset}$.

We may now formulate the following version of the broken-cycle theorem.

\begin{thm}\label{thyp}
For any set $\cal D$ of broken-cyclic subsets of edges in a hypergraph $H$ it holds that
\begin{equation}
a_i = a_{i,\cal D}
\end{equation}
for all $i$.
\end{thm}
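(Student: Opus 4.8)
The strategy is to show that the quantities $a_{i,\cal D}$ satisfy exactly the same recursion relations as the $a_i$ — namely the analogues of \eqref{I}, \eqref{II}, \eqref{III} — from which the equality $a_i = a_{i,\cal D}$ follows by induction on the number of edges. The base case is immediate: when $E=\emptyset$ there are no broken-cyclic sets, so $\cal D$ is empty, and both sides reduce to \eqref{III}. For the inductive step I would fix the maximal edge $e = e_{\max}$ of $H$ and mimic the derivation that led to \eqref{I}. The key point is the choice of $e_{\max}$: if $F\in{\cal E}_{\cal D}$ and $e_{\max}\notin F$, then adding $e_{\max}$ to $F$ cannot create any broken-cyclic subset contained in $F\cup\{e_{\max}\}$ unless that subset already forces $e_{\max}$ to be exceeded by an even larger edge — impossible since $e_{\max}$ is maximal. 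More precisely, one checks that the bijection $\phi(F) = F\cup\{e\}$ restricts to a bijection between $\{F\in{\cal E}_{\cal D}^i : e\notin F\}$ and $\bigcup_{j\le i}\{F\in{\cal E}_{\cal D}^{j} : e\in F,\ k(F\setminus\{e\})=i\}$, just as in the unrestricted case, because membership in ${\cal E}_{\cal D}$ is unaffected: $A\subseteq F\cup\{e\}$ with $A\in\cal D$ would require (if $e\in A$) that $A$ be broken-cyclic with some $e_0 > \max A \ge$ a non-maximal edge, still leaving room; but in fact one wants to argue that no $A\in\cal D$ has $e=e_{\max}$ as its maximum, or handle the case carefully. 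This is the delicate book-keeping step.

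Granting the decomposition, define
\[
b_{e,\cal D}^{i,j} = \sum_{F\in{\cal B}_{e,\cal D}^{i,j}} (-1)^{|F|}, \qquad
{\cal B}_{e,\cal D}^{i,j} = \{F\in{\cal E}_{\cal D} : e\in F,\ k(F)=i,\ k(F\setminus\{e\})=j\},
\]
and the same manipulation as before yields $a_{i,\cal D} = \sum_{j>i} b_{e,\cal D}^{i,j} - \sum_{j<i} b_{e,\cal D}^{j,i}$, hence in particular $a_{1,\cal D} = \sum_{j=2}^n b_{e,\cal D}^{1,j}$. The next step is the analogue of Proposition~\ref{bformula}: decompose $F\in{\cal B}_{e,\cal D}^{i,j}$ according to the vertex partition $V = V_1\sqcup\dots\sqcup V_j$ induced by the connected components of $\bar H\langle F\setminus\{e\}\rangle$, with $e$ meeting exactly $j-i+1$ of the blocks. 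Here Lemma~\ref{stardec} does the essential work: a subset $A\in\cal D$ contained in $F$ is broken-cyclic, hence connected, hence lies entirely within one component $H[V_k]$, so the condition $A\not\subseteq F$ factorizes over the blocks. Setting ${\cal D}_k = \{A\in\cal D : A\subseteq E(H[V_k])\}$, each $F_k$ ranges over spanning connected subgraphs of $H[V_k]$ avoiding ${\cal D}_k$, and the product structure gives
\[
b_{e,\cal D}^{i,j} = - \sidesum{i}_{V_1\sqcup\dots\sqcup V_j=V}\ \prod_{k=1}^j a_{1,{\cal D}_k}(H[V_k]),
\]
the sum being over decompositions with $e$ meeting exactly $j-i+1$ blocks — structurally identical to \eqref{II} with $a_1$ replaced by $a_{1,{\cal D}_k}$.

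Finally, specializing to $i=1$ and summing over $j$ gives the recursion
\[
a_{1,\cal D}(H) = - \sum_{j=2}^n \sidesum{1}_{V_1\sqcup\dots\sqcup V_j=V}\ \prod_{k=1}^j a_{1,{\cal D}_k}(H[V_k]),
\]
which is precisely \eqref{II'} with each $a_1(H[V_k])$ replaced by $a_{1,{\cal D}_k}(H[V_k])$. Since $H[V_k]$ has strictly fewer edges than $H$, the induction hypothesis gives $a_{1,{\cal D}_k}(H[V_k]) = a_1(H[V_k])$, whence $a_{1,\cal D}(H) = a_1(H)$; then \eqref{I} (for both $\cal D$ and $\emptyset$) and the formula for $b_{e,\cal D}^{i,j}$ propagate the equality to all $a_{i,\cal D} = a_i$. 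I expect the main obstacle to be the first step: verifying carefully that choosing $e = e_{\max}$ makes the class ${\cal E}_{\cal D}$ behave well under the bijection $\phi$ — i.e.\ that no broken-cyclic set in $\cal D$ obstructs the decomposition — and that the residual $\cal D$-conditions on the subgraphs $H[V_k]$ are exactly the induced sets ${\cal D}_k$, with the edge ordering correctly inherited so that the ${\cal D}_k$ still consist of broken-cyclic subsets of $H[V_k]$.
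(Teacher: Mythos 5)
Your proposal follows essentially the same route as the paper's proof: fix $e=\max E$, check that the bijection $F\mapsto F\cup\{e\}$ respects ${\cal E}_{\cal D}$, derive the analogues of \eqref{I} and \eqref{II} via Lemma~\ref{stardec} with the induced families ${\cal D}_k$, and induct on the number of edges with base case $E=\emptyset$. The one step you flag as delicate resolves exactly as your second guess suggests: no broken-cyclic set $A$ can contain $e_{\max}$ at all, since then $\max A=e_{\max}$ and Definition~\ref{delta} would require an edge $e_0>e_{\max}$, which does not exist; hence $A\subseteq F\cup\{e_{\max}\}$ if and only if $A\subseteq F$, and the bijection goes through unchanged.
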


\begin{proof} Let $e=\mbox{max}\, E$. Defining the sets
\begin{equation}
{\cal A}^i_{e,\cal D} = {\cal A}^i_{e}\cap {\cal E}_{\cal D}\,,\qquad  {\cal B}^{i,j}_{e,\cal D} = {\cal B}^{i,j}_{e}\cap{\cal E}_{\cal D}\,,
\end{equation}
we have the decomposition
\begin{equation}
{\cal E}^i_{\cal D} = {\cal A}^i_{e,\cal D}\cup\left(\bigcup_{j\geq i}{\cal B}^{i,j}_{e,\cal D}\right)
\end{equation}
into disjoint subsets. Moreover, since $e$ is maximal in $E$ it does not belong to any broken-cyclic
subset in $H$ and therefore the mapping $\varphi$ defined by $\varphi(F) = F\cup\{e\}$ is a bijection from ${\cal A}^i_{e,\cal D}$ onto $\bigcup_{j\leq i} {\cal B}^{j,i}_{e,\cal D}$. Thus, defining
\begin{equation}
b^{i,j}_{e,\cal D} = \sum_{F\in{\cal B}^{i,j}_{e,\cal D}} (-1)^{|F|}\,,
\end{equation}
the same arguments as those leading to relation \eqref{I} imply
\begin{equation}\label{Ihyp}
a_{i,\cal D} = \sum_{j>i} b_{e,\cal D}^{i,j} - \sum_{j<i} b_{e,\cal D}^{j,i}\,.
\end{equation}

We next argue that the analogue of \eqref{II} also holds. Let $F\in {\cal B}^{i,j}_{e,\cal D}$ and consider the corresponding connected components $H_1=(V_1,F_1),\dots, H_N= (V_j,F_j)$ of the subgraph $\bar H\langle F\setminus\{e\}\rangle$ (see the proof of Proposition~\ref{bformula}). For $A\in \cal D$ we have by Lemma~\ref{stardec} that $A\subseteq F$ if and only if $A\subseteq F_k$ for some $k=1,\dots, j$. Defining
\begin{equation}
{\cal D}_k = {\cal D}\cap 2^{E_k}\,,
\end{equation}
where $E_k$ denotes the edgeset of $H[V_k]$, this means that $A\nsubseteq F$ for all $A\in{\cal D}$ if and only if $A\nsubseteq F_k$ for all $A\in{\cal D}_k$ and all $k=1,\dots, j$. Observe that any $A\in{\cal D}_k$ is broken-cyclic in $H[V_k]$ since the vertices of edges in $A$ belong to $V_k$ and hence $H\langle A\rangle = H[V_k]\langle A\rangle$. We conclude that $F\in {\cal B}^{i,j}_{e,\cal D}$ if and only if $F_k\in {\cal A}^{1}_{e,{\cal D}_k}(H[V_k])$ for all $k=1,\dots, j$.

As in the proof of Proposition~\ref{bformula} we obtain, conversely, from any decomposition $V_1\sqcup\dots\sqcup V_j=V$ and connected, spanning subgraphs $H_1=(V_1,F_1),\dots, H_j= (V_j,F_j)$ of $H_1=H[V_1],\dots, H_j= H[V_j]$ such that $A\nsubseteq F_k$ for all $A\in {\cal D}_k$ and all $k=1,\dots, j$, and such that $e$ intersects exactly $j-i+1$ of the sets $V_1,\dots, V_j$, that $F=F_1\cup\dots F_j\cup\{e\}$ belongs to ${\cal B}^{i,j}_{e,\cal D}$. Hence we obtain the desired relation
\begin{equation}\label{IIhyp}
b^{i,j}_{e,\cal D}(H) = -\sidesum{i}_{V_1\sqcup\dots\sqcup V_j=V} \prod_{k=1}^j a_{1,{\cal D}_k}(H[V_k])\,,
\end{equation}
where one should note that ${\cal D}_k$ depends solely on $V_k$ and $\cal D$ for a given $H$.

Having established equations \eqref{Ihyp} and \eqref{IIhyp} the claimed equality of $a_i$ and $a_{i,{\cal D}}$ follows by induction on the number of edges since, if $E=\emptyset$, we must have $\cal D=\emptyset$ and so
\begin{equation}
  a_{i,\cal D}(\bar H\langle \emptyset \rangle) = a_i(\bar H\langle \emptyset \rangle)\,,\quad i=1,2,3,\dots
	\end{equation}
\end{proof}

The following Propositions \ref{deltacond} and \ref{cyclecond} show that Theorem~\ref{thyp} contains the broken cycle theorems of \cite{dohmen1, dohmen2, trinks} and those quoted for hypergraphs in \cite{dohmen-trinks}.

\begin{prop}\label{deltacond}
Assume $H'=(V',F)$ is a $\delta$-cycle in $H=(V,E)$ in the sense of \cite{trinks}, i.e.\ $H'$ is a minimal subgraph of $H$ such that $F\neq \emptyset$ and $k(H')=k(H'-e)$ for all $e\in F$. Then $F\setminus\{\mbox{\rm max}\, F\}$ is broken-cyclic in $H$ according to Definition~\ref{delta}.
\end{prop}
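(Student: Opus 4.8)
The plan is to unwind the definition of a $\delta$-cycle and verify that $F\setminus\{\max F\}$ satisfies property $(\bigstar)$ of Definition~\ref{delta}, with $e_0 = \max F$ playing the role of the edge exceeding the maximum of the remaining set. Write $e_0 = \max F$ and $F' = F\setminus\{e_0\}$. There are two things to check: that $H\langle F'\rangle$ is connected, and that $e_0\subseteq\bigcup_{f\in F'}f$ together with $e_0 > \max F'$ (the latter being immediate since $e_0$ was the maximum of $F$ and $F'\subsetneq F$). So the real content is entirely about connectedness and about $e_0$ being covered by the vertices of $F'$.

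First I would record the basic consequences of minimality. Since $H'=(V',F)$ is a $\delta$-cycle, $F\neq\emptyset$ and $k(H') = k(H'-e)$ for every $e\in F$; minimality means no proper non-empty subset of $F$ has this property (with the appropriate vertex set, namely the one it spans). From $k(H'-e_0) = k(H')$ applied to $e_0$ I would argue that $H'$ is connected: if $H'$ had a connected component not meeting $e_0$, that component's edge set would be a smaller non-empty set still satisfying the $\delta$-cycle condition, contradicting minimality — so $k(H') = 1$. Next, $k(H'-e_0)=k(H')=1$ says $H'-e_0 = (V', F')$ is connected; here one must be slightly careful about the vertex set, but since every vertex of $V'$ lies on some edge of $F$ and the condition is that deleting $e_0$ does not disconnect, the vertices of $e_0$ remain attached to the rest through $F'$, which forces $e_0\subseteq\bigcup_{f\in F'}f$ and also shows that $H\langle F'\rangle$ (spanned by $F'$, hence on vertex set $\bigcup_{f\in F'}f$) is connected. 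That gives exactly the two clauses of $(\bigstar)$.

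The main obstacle, and the step needing the most care, is the bookkeeping around vertex sets: $H\langle F'\rangle$ in Definition~\ref{delta} is the subgraph \emph{spanned by} $F'$, i.e. its vertex set is $\bigcup_{f\in F'}f$, whereas the $\delta$-cycle condition $k(H') = k(H'-e)$ of \cite{trinks} is phrased for the subgraph $H'-e$ on the \emph{fixed} vertex set $V'$. One has to show these two notions of "number of components" agree for $e = e_0$, which is precisely where the covering statement $e_0\subseteq\bigcup_{f\in F'}f$ enters: if some vertex of $e_0$ were isolated in $H'-e_0$, then $k(H'-e_0) = k(H') + (\text{number of such isolated vertices})/\text{...} > k(H')$, contradicting the $\delta$-cycle condition. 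Once this compatibility is nailed down, the argument is short; I would present the isolation-versus-minimality dichotomy explicitly and then read off $(\bigstar)$ directly.
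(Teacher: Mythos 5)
Your argument is correct and follows essentially the same route as the paper: minimality forces $k(H')=k(H'-e)=1$ for all $e\in F$, so $H'-\max F$ is connected and coincides with $H\langle F\setminus\{\max F\}\rangle$ on the vertex set $V'$, which yields both clauses of $(\bigstar)$. The only nitpick is that in your connectedness step the offending component could be an isolated vertex with empty edge set, so it need not itself satisfy the $\delta$-cycle condition; but minimality disposes of that case just as easily, since deleting such a vertex gives a smaller subgraph with the required property.
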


\begin{proof}
Since $H'$ is minimal it follows that $k(H')=k(H'-e)=1$ for all $e\in F$. In particular, $H'- \mbox{max}\, F$ is connected and equals $H\langle F\setminus\{\mbox{max}\, F\}\rangle$ with vertex set $V'$. Hence, $\mbox{max}\, F\subseteq \bigcup_{f\in F\setminus\{\mbox{max}\, F\}} f$ and, of course, $\mbox{max}\, F> \mbox{max}(F\setminus\{\mbox{max}\, F\})$.
\end{proof}

\begin{prop}\label{cyclecond}
Let $C=x_1e_1x_2e_2 \dots x_ne_nx_1$ be a cycle in $H$ in the sense of \cite{berge}, i.e.\ $x_1$, \dots, $x_n$, resp. $e_1,$ \dots, $e_n$, are pairwise distinct vertices, resp. edges, in $H$ such that $x_i\in e_{i-1}\cap e_i$ for $i=1,\dots, n$ (with $e_0\equiv e_n$). Setting $F=\{e_1,\dots, e_n\}$ we have that $F\setminus\{\mbox{\rm max}\, F\}$ is broken-cyclic in $H$ provided
\begin{equation}\label{incl}
\mbox{\rm max}\, F\subseteq \bigcup_{f\in F\setminus\{\mbox{\small\rm max}\, F\}} f\,,
\end{equation}
which in particular holds if $\mbox{\rm max}\, F$ has cardinaliy 2.
\end{prop}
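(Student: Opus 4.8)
The plan is to show directly that $F':=F\setminus\{\mathrm{max}\,F\}$ satisfies property~($\bigstar$) of Definition~\ref{delta}. Write $e_m=\mathrm{max}\,F$, so that $F'=\{e_j:1\le j\le n,\ j\neq m\}$, reading all indices cyclically modulo $n$ ($e_0\equiv e_n$, $x_0\equiv x_n$). Two things have to be checked: that $H\langle F'\rangle$ is connected, and that there is an edge $e_0\subseteq\bigcup_{f\in F'}f$ with $e_0>\mathrm{max}\,F'$. For the latter I will simply take $e_0=e_m$, so that the whole proposition reduces to the connectivity of $H\langle F'\rangle$ together with the inclusion hypothesis~\eqref{incl}. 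I expect the only real (and rather mild) obstacle to be the connectivity step, where the cyclic bookkeeping must be handled with some care; the point is that deleting a single edge from the Berge cycle $C$ leaves a Berge \emph{path}, which is connected.

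To make this precise I would list the edges of $F'$ in the order obtained by traversing $C$ starting just after $e_m$, namely $e_{m+1},e_{m+2},\dots,e_{m-1}$. Any two edges consecutive in this list are consecutive edges of $C$, hence share a cycle vertex — the pair $(e_j,e_{j+1})$ shares $x_{j+1}$ (indices mod $n$) — while the two incidences that had linked $e_m$ to its cycle neighbours are simply gone and are not needed. It follows that every vertex of $\bigcup_{f\in F'}f$ lies in some edge of this chain and that consecutive edges of the chain are joined by a common vertex, so all the edges, and hence all those vertices, lie in one connected component; thus $H\langle F'\rangle$ is connected. (When $n=2$ the set $F'$ is a single edge and there is nothing to prove here.)

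It then remains to verify ($\bigstar$) with $e_0=e_m=\mathrm{max}\,F$. The required inclusion $e_0\subseteq\bigcup_{f\in F'}f$ is precisely hypothesis~\eqref{incl}; and since $\le$ is a linear order with maximum $e_m$ on the pairwise distinct edges $e_1,\dots,e_n$, we have $e_0=e_m>e_j$ for every $j\neq m$, i.e.\ $e_0>\mathrm{max}\,F'$. Hence $F'$ is broken-cyclic in $H$. For the last assertion, suppose $|\mathrm{max}\,F|=2$: then $e_m$ contains the two distinct cycle vertices $x_m\in e_{m-1}\cap e_m$ and $x_{m+1}\in e_m\cap e_{m+1}$, forcing $e_m=\{x_m,x_{m+1}\}$, and since $x_m\in e_{m-1}$, $x_{m+1}\in e_{m+1}$ with $e_{m-1},e_{m+1}\in F'$, we get $e_m\subseteq e_{m-1}\cup e_{m+1}\subseteq\bigcup_{f\in F'}f$, so~\eqref{incl} holds automatically. (The degenerate case $n=1$ needs no separate treatment, since then $\bigcup_{f\in F'}f=\emptyset$ and~\eqref{incl} cannot hold, the edges of $H$ having cardinality at least $2$.)
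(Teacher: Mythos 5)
Your proof is correct and follows essentially the same route as the paper's: take $e_0=\max F$, note that deleting one edge from the Berge cycle leaves a chain of edges in which consecutive members share a vertex (hence $H\langle F\setminus\{\max F\}\rangle$ is connected), and for the cardinality-$2$ case observe $e_m=\{x_m,x_{m+1}\}\subseteq e_{m-1}\cup e_{m+1}$. The paper simply declares the connectivity step ``clear,'' whereas you spell it out; otherwise the arguments coincide.
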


\begin{proof}
It is clear that $H\langle F\setminus\{\mbox{max}\, F\}\rangle$ is connected and that \eqref{incl} ensures that we may use $e_0 = \mbox{max}\, F$ in Definition~\ref{delta}.

If $\mbox{max}\, F =e_k$ has cardinality $2$ then $e_k=\{x_k,x_{k+1}\}\subseteq e_{k-1}\cup e_{k+1}\subseteq\cup_{ f\in F\setminus\{e_k\}} f$.
\end{proof}

Alternating sign properties of the $a_i$ for hypergraphs such as the ones described in Section
\ref{sec:graphs} for graphs have been demonstrated in some specific cases, see
e.g. \cite{dohmen1}. To what extent analogues of \eqref{pos-mon2} can be obtained in the general
case of hypergraphs is not clear. We should mention on this topic that the deletion-contraction
principle has been extended to hypergraphs \cite{Zykov_1974} as well as to mixed hypergraphs
\cite{voloshin1993}.

\section{An application: the first chromatic coefficient for complete hypergraphs}
\label{sec:complete-hypergraphs}
As a last topic we show that the recursion relations of Section~\ref{sec:hypergraphs} can be used to derive the value of $a_1$ for complete hypergraphs. Let $K_n^r$ be the $r$-complete hypergraph of order $n$, i.e.\ the edge set of $K_n^r$ consists of all
$r$-subsets of its vertex set $V=\{1,2,\dots,n\}$.
Note that if $r=2$, then $K^2_n$ is the complete graph $K_n$ and the result is well known (see e.g. \cite{Dong2005}).

We shall calculate $a_1(K_n^r)$ for $r\geq 2$ and $n\geq 1$ making use of \eqref{II'}, which in this case takes the form
\begin{align}\label{recompl}
a_1(K_n^r) = - \sum_{j=2}^r \sum_{\substack{1\leq k_1\leq\dots\leq k_j\\ k_1+\dots +k_j=r}} N^r_{k_1,\dots,k_j}\!\sum_{\substack{s_1,\dots, s_j\geq 0\\ s_1+\dots +s_j=n-r}}\! \binom{n-r}{s_1 \dots s_j} \cdot a_1(K^r_{k_1+s_1})\cdot\ldots \cdot a_1(K_{n_j+s_j}^r)\,,
\end{align}
where $N^r_{k_1,\dots,k_j}$ denotes the number of partitions of $\{1,\dots,r\}$ into $j$ sets of size $k_1,\dots, k_j$ and $\binom{n-r}{s_1 \dots s_j}$ is the standard multinomial coefficient.

Note also that we obviously have
\begin{equation}
\chi_{\small K_n^r}(\lambda) = \begin{cases} \lambda^n\quad \mbox{if $0\leq n<r$}\\
 \lambda^n -\lambda \quad \mbox{if $n=r$}\end{cases}\,,
\end{equation}
so that, in particular,
\begin{equation}\label{inicond}
a_1(K_n^r) =\begin{cases} 1\quad \mbox{if $n=1$}\\ 0\quad \mbox{if $n=2,3,\dots r-1$}\end{cases}
\end{equation}
(while $a_1(K_n^n) = -1$).

\begin{thm}\label{thm3} For $r\geq 2$ and $n\geq 1$ it holds that
\begin{equation}\label{eq:a1-formula}
a_1(K_n^r) = - (n-1)!\, \mu_{r-1}(n)\,,
\end{equation}
where
\begin{equation}
    \mu_{r}(n) = \sum_{i=1}^{r} R_i^{-n}
\end{equation}
and $R_1,\dots, R_{r}$ denote the roots of the $r$'th Taylor polynomial $E_{r}$ of $\exp$.
\end{thm}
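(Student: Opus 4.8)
The plan is to pass to exponential generating functions. Put $\Phi(t)=\sum_{n\ge1}a_1(K_n^r)\,t^n/n!\in\Q[[t]]$, so that $\Phi(0)=0$. The target identity is $e^{\Phi(t)}=E_{r-1}(t)$. Granting it, one writes $E_{r-1}(t)=\tfrac1{(r-1)!}\prod_{i=1}^{r-1}(t-R_i)$; evaluating at $t=0$ gives $\prod_iR_i=(-1)^{r-1}(r-1)!$, hence $e^{\Phi(t)}=\prod_{i=1}^{r-1}(1-t/R_i)$. Taking logarithms, $\Phi(t)=\sum_{i=1}^{r-1}\log(1-t/R_i)=-\sum_{n\ge1}\tfrac{t^n}{n}\sum_{i=1}^{r-1}R_i^{-n}=-\sum_{n\ge1}\tfrac{t^n}{n}\mu_{r-1}(n)$, and matching coefficients with the definition of $\Phi$ yields $a_1(K_n^r)=-(n-1)!\,\mu_{r-1}(n)$, which is~\eqref{eq:a1-formula}. (The roots $R_i$ are distinct and nonzero because $E_{r-1}'=E_{r-2}$ and $E_{r-1}-E_{r-2}=t^{r-1}/(r-1)!$, though this is not needed provided roots are counted with multiplicity.)

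The heart of the argument is to read the recursion~\eqref{recompl} as a property of $\Phi$. Take $e$ to be the maximal $r$-edge of $K_n^r$; then a term of~\eqref{recompl} corresponds to a set partition of the vertex set in which $e$ meets every block, weighted by the product of $a_1$ over the blocks (terms with a disconnected block vanish automatically). I would separate the $r$ vertices of $e$ (``marked'') from the remaining $n-r$, and work with the bivariate generating function in which $u$ records marked vertices and $x$ the others. A single block with $k\ge1$ marked and $s\ge0$ unmarked vertices has weight $a_1(K_{k+s}^r)$, so its generating function is $\beta(u,x)=\sum_{k\ge1,\,s\ge0}a_1(K_{k+s}^r)\tfrac{u^k}{k!}\tfrac{x^s}{s!}$; since ordered $j$-tuples of blocks are counted by $\beta^j$, dividing by $j!$ and summing over $j\ge2$ shows that partitions into at least two such blocks are enumerated by $e^{\beta}-1-\beta$. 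Thus~\eqref{recompl} states exactly that $-a_1(K_{r+m}^r)=\bigl[\tfrac{u^r}{r!}\tfrac{x^m}{m!}\bigr]\bigl(e^{\beta}-1-\beta\bigr)$ for all $m\ge0$. By the binomial theorem $\beta(u,x)=\Phi(u+x)-\Phi(x)$, and since $\bigl[\tfrac{u^r}{r!}\tfrac{x^m}{m!}\bigr]\beta=a_1(K_{r+m}^r)$, the two sides cancel and there remains $\bigl[\tfrac{u^r}{r!}\tfrac{x^m}{m!}\bigr]e^{\Phi(u+x)-\Phi(x)}=0$ for every $m\ge0$. As $e^{\Phi(x)}$ is a unit of $\Q[[x]]$, this means $[u^r]\,e^{\Phi(u+x)}=0$, i.e.\ $\tfrac{d^r}{dt^r}e^{\Phi(t)}=0$, so $e^{\Phi(t)}$ is a polynomial of degree at most $r-1$.

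It remains to pin down that polynomial, which is where the initial conditions~\eqref{inicond} enter: they give $\Phi(t)\equiv t\pmod{t^r}$, hence $e^{\Phi(t)}\equiv e^{t}\equiv\sum_{k=0}^{r-1}t^k/k!=E_{r-1}(t)\pmod{t^r}$. A polynomial of degree $\le r-1$ is determined by its coefficients of $t^0,\dots,t^{r-1}$, so $e^{\Phi(t)}=E_{r-1}(t)$, which completes the proof of the target identity and hence of~\eqref{eq:a1-formula} as in the first paragraph.

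The step I expect to be most delicate is the middle one: arranging the bivariate exponential generating function so that the combinatorial content of~\eqref{recompl} is reproduced faithfully — in particular, that it is precisely the coefficient of $u^r$ that becomes constrained (reflecting $|e|=r$), that the overall sign comes out right, and that the correspondence between ``partitions with a distinguished maximal edge'' and the product structure really is a bijection. A secondary point requiring care is that~\eqref{recompl} holds only for $n\ge r$, so the conclusion that $e^{\Phi}$ has degree $\le r-1$ rests on those relations alone, whereas the coefficients of $t^0,\dots,t^{r-1}$ of $e^{\Phi}$ are supplied separately by~\eqref{inicond}; both inputs are genuinely needed.
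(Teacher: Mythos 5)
Your proof is correct and follows essentially the same route as the paper: both pass to the exponential generating function, show that $e^{\Phi}$ (the paper's $\exp\circ\varphi$ with $g=\varphi'$) satisfies $(e^{\Phi})^{(r)}=0$ and hence is a polynomial of degree at most $r-1$, identify that polynomial as $E_{r-1}$ from the initial conditions \eqref{inicond}, and extract the coefficients via the roots $R_i$. The only real difference is in the middle step and the final extraction, and it is minor: where you obtain the differential equation by a bivariate exponential-formula argument and then expand $\log\prod_i(1-t/R_i)$, the paper invokes the Fa\`a di Bruno-type identity \eqref{compleib} directly and reads off coefficients from $g=E_{r-1}'/E_{r-1}=\sum_i(x-R_i)^{-1}$; the two are equivalent.
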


\begin{proof} Fix $r\ge 2$. We introduce the generating function $g(x)$ given by
\begin{equation}\label{genfct}
g(x) = \sum_{n=0}^\infty \frac{a_1(K^r_{n+1})}{n!} x^n
\end{equation}
and rewrite equations \eqref{recompl}-\eqref{inicond} as
\begin{equation}\label{eq:diffgen}
g^{(r-1)}(x) = - \sum_{j=2}^r \sum_{\substack{1\leq k_1\leq\dots\leq k_j\\k_1+\dots +k_j =r}} N^r_{k_1, \dots, k_j} g^{(k_1-1)}(x)\cdot\ldots\cdot g^{(k_j-1)}(x)
\end{equation}
with initial condition
\[
g(0)=1\,,\quad g'(0)=g''(0) =\dots = g^{(r-2)}(0) =0\,.
\]
Given two $C^\infty$-functions $\psi$ and $\varphi$ of a real variable we recall the formula
\begin{equation}\label{compleib}
(\psi\circ\varphi)^{(r)}(x) = \sum_{j=1}^r \sum_{\substack{1\leq k_1\leq\dots\leq k_j\\k_1+\dots k_j =r}} N^r_{k_1,\dots, k_j} \psi^{(j)}(\varphi(x)) \varphi^{(k_1)}(x)\cdot\dots\cdot \varphi^{(k_j)}(x)\,,
\end{equation}
which is easy to verify by induction. For $\psi=\exp$ this gives
$$
\exp(-\varphi(x))\left(\exp\circ\varphi\right)^{(r)}(x) = \sum_{j=1}^r \sum_{\substack{1\leq k_1\leq\dots\leq k_j\\k_1+\dots k_j =r}} N^r_{k_1,\dots, k_j} \varphi^{(k_1)}(x)\cdot\dots\cdot \varphi^{(k_j)}(x)\,.
$$

Setting $g=\varphi'$ in \eqref{eq:diffgen} and using $N^{(r)}_{1,1,\dots 1}=1$ it follows that $\varphi$ satisfies
$$
(\exp\circ\varphi)^{(r)} (x) = 0\,,
$$
and hence that $\exp\circ\varphi$ equals a polynomial $P$ of degree at most $r-1$. Thus
$$
g(x) = \frac{P'(x)}{P(x)}\,.
$$
The initial conditions are easily seen to imply that $P=E_{r-1}$ and consequently
$$
g(x) = \frac{E'_{r-1}(x)}{E_{r-1}(x)}
= \sum_{i=1}^{r-1} \frac{1}{x-R_i}\,,
$$
which gives the claimed result.
\end{proof}

\begin{rem}
For $r=2$ we have $R_1=-1$ and we get from Theorem \ref{thm3} the known result
\begin{equation}
a_1(K_n) = a_1(K^2_n) = (-1)^{n-1} (n-1)!\,.
\end{equation}
By inserting this value into \eqref{eq:lemma-subgraph}, we obtain an expression for $a_i(K_n)$ for all $i$. It should be noted though that the value of $a_i(K_n)$ is equal to $ s(n,i)$, where $s(n,i)$ denotes the signed Stirling numbers of the first kind.
\end{rem}

\begin{rem}
For $r=3$ the roots of $E_2$ are $R_{\pm} = -1\pm i$ which gives
\begin{equation}
a_1(K_n^3) = (-1)^{n-1}(n-1)!\, 2^{1-\frac n2}\cos\frac{n\pi}{4}
\end{equation}
\end{rem}

For the calculation of $a_1(K^r_n)$ for larger values of $r$ one may use the results available in the literature for the moment function $\mu_r(n)$. In particular, the value of $\mu_r(n)$ was computed for $n\le 2(r+1)$ \cite[Theorem 7]{Zemyan2005}, which gives the following expression for $a_1(K^r_n)$, expanding the one given in \eqref{inicond}
\begin{equation}
a_1(K_n^r) = \begin{cases}
    1 & \text{if $n=1$}\\
    0 & \text{if $2\le n \le r-1$}\\
    (-1)^{n-r+1}\binom{n-1}{r-1} & \text{if $r \le n \le 2r-1$}\\
    - [1+(-1)^{r}]\binom{2r-1}{r}& \text{if $n=2r$}
\end{cases}.
\end{equation}
In \cite{Zemyan2005} it was also shown that, once $\mu_r(n)$ is known for $r$ consecutive values of $n$, then it is possible to recursively determine the value of $\mu_r(n)$ for every $n$. This recursive formula for $\mu_r(n)$, when expressed in terms of $a_1(K^r_n)$, reads as:
\begin{equation}
\sum_{j=0}^{r-1} \binom{r-2+m}{r-1-j} a_1(K^{r}_{j+m}) = 0, \quad \forall m \in \N.
\end{equation}

On a more general note, the properties of the zeros of the Taylor polynomials of $\exp$ have been intensively investigated, starting from the work of Szeg{\"o} \cite{szego1924eigenschaft} and Dieudonn\'e \cite{dieudonne1935zeros}, who showed that the points $\frac{R_i}{r}$ accumulate, as $r$ goes to infinity, on a closed curve contained in the unit circle, now known as the Szeg{\"o} curve. See also \cite{Newman1972,Newman1976,Buckholtz1966,Conrey1988,Pritsker1997,Walker2003,varga2008dynamical,Zemyan2005} for further developements.
\bigskip

\noindent{\bf Acknowledgement} The authors acknowledge support from the Villum Foundation via the QMATH Centre of Excellence (Grant no. 19959). A.~L. acknowledges support from the Walter  Burke  Institute  for Theoretical Physics in the form of the Sherman Fairchild Fellowship as well as support from the Institute for Quantum  Information  and  Matter  (IQIM),  an  NSF  Physics Frontiers Center (NFS Grant PHY-1733907).

\bibliography{bibliography}

\end{document}